\newcommand{\eps}{\varepsilon}             
\newcommand{\pd}{\partial}                 
\renewcommand{\d}{{\rm{d}}}                
\newcommand{\dg}{\, \d \sigma}    
\newcommand{\dgx}{\, \d \sigma(x)}    
\newcommand{\dx}{\, {\rm{d}} x}            
\newcommand{\D}{\nabla}                    
\newcommand{\R}{\mathbb{R}}                
\newcommand{\N}{\mathbb{N}}                
\newcommand{\Z}{\mathbb{Z}}                
\newcommand{\co}{\rightarrow}              
\newcommand{\SL}{\Delta_{\Gamma}}          
\newcommand{\fu}{f_u}   
\newcommand{\fv}{f_v}   
\newcommand{\qu}{q_u}   
\newcommand{\qv}{q_v}   
\newcommand{\qV}{q_V}   
\newcommand{\overbar}[1]{\mkern
  1.5mu\overline{\mkern-1.5mu#1\mkern-1.5mu}\mkern 1.5mu}
\DeclareMathOperator{\const}{const\!}
\renewenvironment{proof}[0] {\noindent{\em Proof.}}{\hfill \qed\\[1ex] }
\theoremstyle{plain}
\numberwithin{equation}{section}
\newtheorem{lemma}{Lemma}[section]
\newtheorem{theorem}[lemma]{Theorem}
\newtheorem{proposition}[lemma]{Proposition}
\newtheorem{corollary}[lemma]{Corollary}
\theoremstyle{definition}
\newtheorem{remark}[lemma]{Remark}
\begin{document} 

\title[Symmetry breaking in a reaction--diffusion model]
{Symmetry breaking in a bulk--surface reaction--diffusion model for
  signaling networks}

\author[A. R\"atz]{Andreas R\"atz}
\address{Andreas R\"atz, Technische Universit\"at Dortmund,
Fakult{\"a}t f{\"u}r Mathematik,
Vogelpothsweg 87,
D-44227 Dortmund}
\email{andreas.raetz@tu-dortmund.de}

\author[M. R\"oger]{Matthias R\"oger}
\address{Matthias R{\"o}ger, Technische Universit\"at Dortmund,
Fakult{\"a}t f{\"u}r Mathematik,
Vogelpothsweg 87,
D-44227 Dortmund}
\email{matthias.roeger@tu-dortmund.de}

\subjclass[2000]{92C37,35K57,35Q92}

\keywords{Reaction-diffusion systems, PDEs on surfaces, Turing instability, numerical simulations of reaction-diffusion systems}

\begin{abstract}
Signaling molecules play an important role for many cellular functions. We investigate here a general system of two membrane reaction--diffusion equations coupled to a diffusion equation inside the cell by a Robin-type boundary condition and a flux term in the membrane equations. A specific model of this form was recently proposed by the authors for the GTPase cycle in cells. We investigate here a putative role of diffusive instabilities in cell polarization. By a linearized stability analysis we identify two different mechanisms. The first resembles a classical Turing instability for the membrane subsystem and requires (unrealistically) large differences in the lateral diffusion of activator and substrate. The second possibility on the other hand is induced by the difference in cytosolic and lateral diffusion and appears much more realistic. We complement our theoretical analysis by numerical simulations that confirm the new stability mechanism and allow to investigate the evolution beyond the regime where the linearization applies.
\end{abstract}

\date{\today}
\maketitle

\section{Introduction}
In numerous biological processes the emergence and maintenance of polarized states in the form of heterogeneous distributions of chemical substances (proteins, lipids) is essential. Such symmetry breaking for example precedes the formation of buds in yeast cells, determines directions of movement, or mediates differentiation and development of cells. Polarized states in biological cells often arise in response to external signals, typically from the outer cell membrane. Transport processes and interacting networks of diffusing and reacting substances both within the cell and on the cell membrane then amplify and process such signals. The distribution of small GTPase molecules in eukaryotic cells presents one example of a complex system with polarization and motivates the present paper. Such molecules can be in an active and in an inactive state. Activation and deactivation typically occurs at the cell membrane and is catalyzed by specific enzymes. In addition to the activation-deactivation cycle GTPase molecules (in its inactive form) shuttle between the membrane and the cytosol, i.e. the inner volume of the cell, by attachment to and detachment from the membrane. These properties induce the specific form of a coupled volume (bulk) and surface reaction--diffusion system. The goal here is to investigate a possible contribution of diffusive instabilities to cell polarization in such coupled systems. 
 
Different deterministic continuous models have been used to investigate polarization of cells (see for example the review \cite{JiEd11}). We consider here a particular class of models that takes the form of a reaction-diffusion system on the membrane coupled to a diffusion process in the interior of the cell. This model has been introduced in \cite{RaRo12} where a reduction to a non-local reaction-diffusion system, involving only membrane variables, has been investigated. A key question in such systems is whether a Turing-type mechanism may contribute to the polarization of cells, with activated GTPase and inactive GTPase representing self-activator and substrate, respectively. Turing-type instabilities however require large differences in the diffusion constants for activator and substrate (or inhibitor). The lateral diffusion on the membrane for active and inactive GTPase on the other hand is in general of comparable size and therefore a Turing instability appears at first glance unrealistic. On the other hand, cytosolic diffusion in cells is typically much faster than lateral diffusion and might induce the necessary difference in diffusion. It is therefore tempting to hypothesize that in such coupled 2D and 3D reaction--diffusion systems diffusive instabilities can in fact contribute to cell polarization.

We will in the following represent the cytosolic volume and the membrane of a cell by a bounded, connected, open domain $B \subset
\R^3$ in space and its two-dimensional boundary $\Gamma := \pd B$
respectively. We assume that $\Gamma$ is given by a smooth, closed
surface and denote by $\nu$ the outer unit normal of $B$ on $\Gamma$. Further we fix a time
interval of observation  $I:=[0,T] \subset \R$ and consider smooth
functions $V:\overbar B \times I \to\R$, $u,v: \Gamma \times I \to
\R$ (representing the cytosolic inactive, membrane-bound active, and
membrane-bound inactive GTPase, respectively) that satisfy the
coupled reaction--diffusion system (stated in a non-dimensional form)
\begin{align}
  \label{eq:V}
  \pd_t  V &=  D \Delta V \quad &&\text{in} \quad
  B  \times  I,\\
  \label{eq:u}
  \pd_t  u &= \SL  u +  \gamma f(u,v)\quad &&\text{on} \quad
  \Gamma \times  I,\\
  \label{eq:v}
  \pd_t  v &= d \SL  v +  \gamma (- f(u,v) + q(u,v,V)) \quad &&\text{on}
  \quad  \Gamma \times  I,\\  
  -D \nabla V \cdot \nu &= \gamma q(u,v,V)  \quad &&\text{on} \quad \Gamma\times I.\label{eq:flux1}
\end{align}
Here $f$ and $g$ represent the activation/inactivation processes and
$q$ describes attachment/detachment at the membrane. In the appendix
we present as specific example the mathematical model for the GTPase cycle from \cite{RaRo12} with explicit choices for $f$ and
$q$ that we also use in our numerical simulations below. The parameter $\gamma > 0$ is a non-dimensional parameter that is related to the spatial scale of the
cell. The coupling of bulk and surface processes in
\eqref{eq:V}--\eqref{eq:flux1} is given in form of a Robin-type
boundary condition. The system is complemented by initial conditions at time $t=0$,
\begin{align*}
	&V(\cdot, 0) \,=\, V_0,\quad v(\cdot, 0) \,=\, v_0,\quad
	u(\cdot, 0) \,=\, u_0,\\
	&V_0\,:\, B\,\to\, \R,\quad v_0,u_0\,:\,\Gamma\,\to\,\R.
\end{align*}
We remark that the system \eqref{eq:V}--\eqref{eq:flux1} automatically satisfies conservation of total mass, i.e.
\begin{gather*}
	M(t)  \,:=\, \int_B V(x,t)\dx + \int_\Gamma (u+v)(x,t) \dgx \,=\, \const,
\end{gather*}
where $\dg$ denotes integration with respect to the surface area measure.

In this contribution we investigate the possibility of diffusive
instabilities for systems of the form
\eqref{eq:V}--\eqref{eq:flux1}. We present a linear stability analysis
and numerical simulations. We find two possible scenarios for a
diffusive instability of spatially homogeneous stationary
states. The first needs large differences in lateral diffusion
for $u$ and $v$ (i.e. a coefficient $d\gg 1$) and resembles a classical Turing instability in the $u,v$ variables. The second mechanism on the other hand does also occur for equal lateral diffusion constants $d=1$ and is rather  based on the different diffusion constants for $u$ and $V$ and therefore on the coupling of bulk and surface equations. As cytosolic diffusion is typically by a factor hundred faster than lateral diffusion, this scenario is much more realistic in the application to signaling networks. In Section \ref{sec:turingNonLocalA} we compare the stability of the full system to its reduction in the formal limit $D\to\infty$. The latter leads to a non-local two-variable system on the membrane that has been analyzed in \cite{RaRo12}
(see also \cite{RaRo13}). There we have only covered the first more
`classical' instability mechanism and have not included a complete
characterization of diffusive instabilities. Here we show that -- in coincidence with the case $D<\infty$ -- we again have the same alternative scenarios for diffusive instabilities. Some specific properties of the second instability mechanism are easier to characterize in the reduction. In particular we find that the second scenario
is different from a standard Turing-type instability as in this case the concentration of activated GTPase in a single spot (most typical in most examples of cell polarization) is always preferred
independent of variations in the parameter values. This robustness makes the second instability mechanism an even more attractive explanation for polarization.  In
Section \ref{sec:numericsFull} we present numerical simulations for 
specific versions both of the full system \eqref{eq:V}--\eqref{eq:flux1} and of the reduced system. The simulations
confirm the instability criteria derived for the linearised system and
allow to investigate the time-evolution after the onset of
heterogeneities and beyond the regime governed by the
linearization. It turns out that even for simple choices of the
constitutive relations $f$ and $q$ the system exhibits a rich 
behavior. In the final Section \ref{sec:discuss} we discuss the results of the paper and in particular comment on the term `diffuse instability' in the present context and with respect to the second instability mechanism.

The most specific property of the model considered here is the coupling of bulk and surface reaction--diffusion systems. Such coupled systems often arise in cell biology where enzymatic processes on intracellular membranes play a role, see \cite{NoGaChReScSl07} and the references therein. Surface--bulk reaction--diffusion or convection--diffusion systems also arise in the modeling of surfactants on two-fluid interfaces \cite{TeLiLoWaVo09}. Coupled surface--bulk systems have been studied intensively over the last decades by numerical simulations, see for example \cite{LeRa05,TeLiLoWaVo09, HeHL12, NoGaChReScSl07, ElRa13} and the references therein. \\
In \cite{LeRa05} a model that is similar to ours and that describes a two-variable diffusion system in a volume coupled to a reaction system on the boundary has been studied. Here the authors provide numerical simulations and a linear stability analysis. They show the existence of Turing instabilities in their model, even for equal bulk diffusion constants. The main difference to our model is that in \cite{LeRa05} both activator and substrate diffuse in the bulk and that no diffusion on the membrane surface is considered.
\section{Stability Analysis For the Full System}\label{sec:turingFullA}

We consider in the following the spatially coupled reaction--diffusion system \eqref{eq:V}--\eqref{eq:flux1} for spherical cell shapes and investigate the possibility of diffusive instabilities of a homogeneous stationary state. Because of the different domains of definition of $V$ and $u,v$ we cannot apply standard conditions for (in)stability and therefore will derive appropriate criteria in this section. Similar to a classical Turing instability we consider a spatially homogeneous stationary state and require that this state is (a) stable against perturbations of the membrane quantities $u,v$ that are spatially homogeneous \emph{on the membrane} and perturbations of the bulk variable $V$ that are radially symmetric (this additional restriction follows already by the first) and (b) unstable with respect to general perturbations. Such property represents a diffusive instability and a symmetry breaking in the sense that the radially symmetric evolution loses its stability as it approaches such a stationary point.

For the constitutive relations $f,q$ we assume that
\begin{align}
	\partial_v f \,&\geq\, 0, \quad
	\partial_v q\,\leq \, 0, \quad \partial_v q\,\leq\, \partial_u q,\quad  \partial_V q \,\geq\, 0,\label{cdt:q}
\end{align}
which are for the application to the GTPase cycle natural conditions
with respect to the interpretation of $f$ as activation rate and $q$ as the flux induced by ad-
and desorption of GTPase at the membrane. As we are interested in symmetry breaking we consider in the following a spatially homogeneous stationary
state $(u_*,v_*,V_*)\in \R_+^3 := \{(x_1,x_2,x_3) \in \R^3 \,:\, x_i >
0,\, i=1,2,3\}$ of \eqref{eq:V}--\eqref{eq:flux1}, which is equivalent to the conditions
\begin{align}
	f(u_*,v_*)\,&=\,0, \label{eq:equi-f}\\
	q(u_*,v_*,V_*)\,&=\,0. \label{eq:equi-q}
\end{align}
For convenience we introduce the following notation,
\begin{align}
	\fu  \,&:=\, \partial_u f(u_*,v_*),\quad \fv  \,:=\, \partial_v f(u_*,v_*),\notag\\
	\quad \qu  \,&:=\, \partial_u q(u_*,v_*,V_*),\quad  \qv  \,:=\, \partial_v q(u_*,v_*,V_*),\quad \qV \,:=\, \partial_V q(u_*,v_*,V_*). \label{eq:coeff}
\end{align}
We assume that in $(V_*,u_*,v_*)$ we have strict inequalities 
\begin{align}
	f_v \,&>\, 0, \quad
	q_v\,< \, 0, \quad q_V\,>\, 0,\label{cdt:q-star}
\end{align}

The linearization of \eqref{eq:V}--\eqref{eq:flux1} in $(V_*,u_*,v_*)$ is given by the system
\begin{align}
  \label{eq:V-lin}
  \pd_t  V &=  D \Delta V \quad &&\text{in} \quad
  B  \times  I,\\
  \label{eq:u-lin}
  \pd_t  u &= \SL  u + \gamma \big(\fu u +  \fv v\Big) \quad &&\text{on} \quad
  \Gamma \times  I,\\
  \label{eq:v-lin}
  \pd_t  v &= d \SL  v + \gamma \big( (-\fu +\qu )u +(-\fv +\qv )v +\qV V\big) \quad &&\text{on}
  \quad  \Gamma \times  I,\\  
  -D \nabla V \cdot \nu &= \gamma \big( \qu u +\qv v +\qV V\big)  \quad &&\text{on} \quad \Gamma\times I\label{eq:flux1-lin}
\end{align}
for unknowns $V:B\times\R\to \R$, $u,v:\Gamma\times\R\to\R$, together with a constraint on the initial conditions, due to the mass conservation property,
\begin{align}
	\int_B V(x,0)\dx + \int_{\Gamma} \big(u(x,0) + v(x,0)\big)\dgx \,&=\, 0. \label{eq:mass-lin}
\end{align}
In the following we assume that no inner membranes are present and assume a spherical shape of the cell, i.e. 
we choose $B = B_1(0)$ and $\Gamma=\partial B =S^2$. This allows in the subsequent stability analysis to use separated variables: we introduce polar coordinates and represent $x\in B$ as $x = r y$ with $y \in
S^2, r \in [0,1]$. We further fix an orthonormal basis  $\{\varphi_{lm}\}_{l\in\N_0,m\in\Z, |m|\leq l}$ of $L^2(\Gamma)$ given by spherical harmonics with
\begin{align}
	-\SL \varphi_{lm} \,&=\, l(l+1)\varphi_{lm} \quad\text{ on }\Gamma \label{eq:sh-1}
\end{align}
and remark that $\varphi_{00}$ is constant on $\Gamma$.
We then consider the following ansatz for solution of the linearized system \eqref{eq:V-lin}--\eqref{eq:flux1-lin}, 
\begin{align}
  u(y,t) &= \sum_{l\in\N_0,m\in\Z, |m|\leq l} u_{lm}(t)\varphi_{lm}(y),
   \\
  v(y,t) &= \sum_{l\in\N_0,m\in\Z, |m|\leq l} v_{lm}(t)\varphi_{lm}(y),
   \\
  V(ry,t) &= \sum_{l\in\N_0,m\in\Z, |m|\leq l} V_{lm}(t)\psi_{lm}(r)\varphi_{lm}(y),
\end{align}
with $u_{lm},v_{lm},V_{lm}:\R\to \R$, $\psi_{lm}:[0,1]\to \R$, $y\in
\Gamma, 0\leq r\leq 1$ (for a similar approach see \cite{LeRa05}). We deduce from
\eqref{eq:V-lin}--\eqref{eq:flux1-lin}, by taking the $L^2(\Gamma)$
scalar product with $\varphi_{lm}$,
\begin{align}
  u_{lm}' &= -  l(l + 1) u_{lm} +  \gamma (\fu u_{lm} + \fv v_{lm}), \label{eq:stab_u}
  \\
  v_{lm}' &= -  d l(l + 1) v_{lm} + \gamma \Big( (-\fu +\qu )u_{lm} + (-\fv +\qv )v_{lm} + \qV \psi_{lm}(1)V_{lm}\Big), \label{eq:stab_v}
  \\
   V_{lm}'(t)\psi_{lm}(r) &= D V_{lm}(t)\left(\psi_{lm}''(r) + \frac2{r}\psi_{lm}'(r) - \frac1{r^2}l(l + 1)\psi_{lm}(r)\right), \label{eq:stab_V}
  \\
  -D V_{lm} \psi_{lm}'(1) &= \gamma (\qu u_{lm} + \qv v_{lm} + \qV \psi_{lm}(1)V_{lm}). \label{eq:stab_q}
\end{align}
From \eqref{eq:stab_V} we obtain that
\begin{align}
	V_{lm}(t)\,=\, \bar{B}_{lm}e^{\omega_{lm}t}, \quad
        \bar{B}_{lm} \in \R, \quad
        \omega_{lm} \in \R, \label{eq:stab_V1}
\end{align}
and $V_{lm}$ is either identically zero or does nowhere vanish.

In the following we first restrict ourselves to the case $V_{lm}\neq 0$. 
We deduce that
\begin{align}
	0\,&=\, r^2 \psi_{lm}''(r) + 2r\psi_{lm}'(r) - \left(l(l+1) + \frac{\omega_{lm}}{D}r^2\right)\psi_{lm}(r).
	\label{eq:psi-lm}
\end{align}
If in addition $\omega_{lm}> 0$, the latter equation implies that 
\begin{align}
  \psi_{lm}(r) \,&=\, \alpha_{lm} i_l\left(\sqrt{\frac{\omega_{lm}}{D}} r \right), \alpha_{lm}\in\R, \label{eq:stab_V12}\\
  i_l(r) \,& =\, \sqrt{\frac{\pi}{2r}} I_{l + \frac1{2}}(r), \notag 
\end{align}
where $I_{l+\frac{1}{2}}$ denotes the respective modified Bessel functions of first kind. 
\\
In the case $\omega_{lm}=0$ we obtain instead
\begin{align}
	\psi_{lm}(r) \,&=\, \alpha_{lm} r^l, \alpha_{lm}\in\R. \label{eq:psi-lm-0}
\end{align}

We derive from \eqref{eq:stab_u}, \eqref{eq:stab_v}, \eqref{eq:stab_V1} and \eqref{eq:stab_q}  the linear ODE system
\begin{align}
  \label{eq:stab_alm}
  u_{lm}' &= \big(-  l(l + 1) + \gamma \fu \big)u_{lm} +  \gamma \fv v_{lm},\\
  \label{eq:stab_blm}
  v_{lm}' &=  -\gamma \fu  u_{lm}  - \big(  d l(l + 1)+\gamma \fv  \big) v_{lm} -D \psi_{lm}'(1)V_{lm},\\
  V_{lm}' &= \omega_{lm} V_{lm}, \label{eq:stab_Vlm}
\end{align}
coupled to an algebraic equation
\begin{align}
  0 &= \gamma (\qu u_{lm} + \qv v_{lm}) + \big(\gamma \qV \psi_{lm}(1)  +D \psi_{lm}'(1)\big)V_{lm}, \label{eq:stab_Blm}
\end{align}
that determines in the case $V_{lm}\neq 0$ together with \eqref{eq:stab_V12} the value of $\omega_{lm}$.
The linear stability analysis then reduces to an analysis of the eigenvalue equation coupled to an algebraic condition. We obtain that an eigenvalue $\omega$ with nonnegative real part exists if and only if first $\omega=\omega_{lm}\in \R^+_0$ and second $\omega_{lm}$ satisfies
\begin{align}
	0 \,&\overset{!}{=} G_l(\omega_{lm}) \notag\\
	&:=\, \gamma \qV \Big(\omega_{lm}^2 + \big((d+1)l(l+1) +(-\fu +\fv )\gamma\big)\omega_{lm}+dl^2(l+1)^2 +\gamma l(l+1)(-d\fu +\fv )\Big) \notag\\
	&\qquad +\kappa_{D,l}(\omega_{lm}) \Big(\omega_{lm}^2 +\big((d+1)l(l+1) +(-\fu +\fv )\gamma\big)\omega_{lm}+			\notag \\
	&\qquad\qquad\qquad\qquad\qquad\qquad  +dl^2(l+1)^2 +\gamma l(l+1)(-d\fu +\fv )\Big)\notag\\
	&\qquad +\kappa_{D,l}(\omega_{lm}) \Big(-\gamma \qv \big(l(l+1)+\omega_{lm}\big)+\gamma^2\big(\fu \qv -\fv \qu \big)\Big)\label{eq:eigenvalue-eq}
\end{align}
with
\begin{align}
  \kappa_{D,l}(\omega)\,&:=\, \frac{D \psi_{lm}'(1)}{\psi_{lm}(1)}\,=\,
	  D\left(\frac{ri_l'(r)}{i_l(r)}\right)\Big|_{r=\sqrt{\frac{\omega}{D}}}, 
  \label{eq:2.24}
\end{align}
where the last equality follows for $\omega>0$ from  \eqref{eq:stab_V12} and for $\omega_{lm}=0$ from \eqref{eq:psi-lm-0} and \eqref{eq:kappa0} below.

In the case $V_{lm}=0$, which is equivalent to $\bar{B}_{lm}=0$, the system \eqref{eq:stab_u}--\eqref{eq:stab_q} is overdetermined. We obtain that the parameters have to satisfy the equation
\begin{align}
	(d-1)l(l+1) q_u \,&=\, \gamma(-f_uq_v + f_v q_u)\frac{1}{q_v}(q_u-q_v) \label{eq:cdt-Vlm0}
\end{align}
and that under this condition any eigenvalue $\omega$ corresponding to the linearized system \eqref{eq:stab_u}--\eqref{eq:stab_q} is given by
\begin{align}
	-q_u\omega\,&=\, \gamma (-f_uq_v + f_vq_u ) + dl(l+1)q_u. \label{eq:om-Vlm0}
\end{align}
Due to the condition \eqref{eq:cdt-Vlm0} the case $V_{lm}=0$ is only relevant for a small -- nowhere open -- subset of the parameter space. Therefore this case cannot contribute to a robust mechanism. 
%
%
%
\subsection{Asymptotic stability with respect to spatially homogeneous perturbations}
Here we would like to consider perturbations $u,v,V$ of a stationary
state $(u_*,v_*,V_*)\in \R_+^3$ such that the membrane quantities
$u,v$ are spatially homogeneous but $V$ is allowed to be heterogeneous. We would like to characterize the stability of our system under such perturbations. In the ansatz
above the restriction to spatially homogeneous $u,v$ means that
$u_{lm}=v_{lm}=0$ for all $l\geq 1$. By \eqref{eq:stab_blm} and
$\psi_{lm}'(1)>0$ (see for example \cite[(10.51.5)]{DLMF}, \cite{OLBC10}) we deduce that also
$V_{lm}=0$ for all $l\geq 1$, and in particular that $V$ is radially symmetric. It
remains to study the condition \eqref{eq:eigenvalue-eq} for $l=0$.
\begin{proposition}\label{prop:stab}
A necessary and sufficient condition for the stability of \eqref{eq:V}--\eqref{eq:flux1} in $(u_*,v_*,V_*)$ under
perturbations that are spatially homogeneous in the $u,v$ variables is that 
\begin{align}
	0\,&<\, \frac{1}{3}(\fu \qv -\fv \qu ) + \qV (\fv -\fu ). \label{eq:case1-stab2}
\end{align}
In this case
\begin{align}
	f_v \,&>\, f_u \label{eq:fu-fv-full}
\end{align}
holds.
\end{proposition}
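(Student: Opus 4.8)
The plan is to reduce everything to the single spherical mode $l=0$ --- which, as observed just before the statement, is the only one surviving under perturbations that are spatially homogeneous in $u,v$ --- and then to analyze the scalar condition \eqref{eq:eigenvalue-eq}. Setting $l=0$ makes $l(l+1)=0$, so that, writing $a:=\fv-\fu$ and $c:=\fu\qv-\fv\qu$,
\[
  G_0(\omega)=\gamma\qV\,\omega(\omega+\gamma a)+\kappa_{D,0}(\omega)\bigl(\omega^2+\gamma(a-\qv)\omega+\gamma^2 c\bigr).
\]
For $l=0$ one has $i_0(r)=\sinh(r)/r$, hence $\kappa_{D,0}(\omega)=\sqrt{\omega D}\coth\sqrt{\omega/D}-D=\omega\,\tilde\kappa(\omega)$, where, with $s=\sqrt{\omega/D}$, the function $\tilde\kappa(\omega):=(s\coth s-1)/s^{2}$ extends continuously to $\tilde\kappa(0)=\tfrac13$. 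Therefore $G_0(\omega)=\omega\,\Phi(\omega)$ with $\Phi(\omega):=\gamma\qV(\omega+\gamma a)+\tilde\kappa(\omega)(\omega^2+\gamma(a-\qv)\omega+\gamma^2 c)$, and one reads off $\gamma^{-2}\Phi(0)=\qV(\fv-\fu)+\tfrac13(\fu\qv-\fv\qu)$; thus \eqref{eq:case1-stab2} is exactly the condition $\Phi(0)>0$, and this is where the coefficient $\tfrac13$ originates.

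The factor $\omega$ in $G_0$ corresponds to the spatially homogeneous mode. Solving \eqref{eq:stab_alm}--\eqref{eq:stab_Blm} at $l=0$, $\omega=0$ and testing the resulting eigenvector against the mass integral shows it carries nonzero total mass exactly when $\Phi(0)\neq0$, so the constraint \eqref{eq:mass-lin} removes this eigenvalue precisely in that case. Combined with the characterization of eigenvalues with nonnegative real part recalled before the statement, stability is thus equivalent to $\Phi(\omega)>0$ for all $\omega\geq0$. Two elementary facts about $\tilde\kappa$ then drive the argument: from $\coth s=\tfrac1s+\sum_{k\geq1}\tfrac{2s}{s^2+k^2\pi^2}$ one gets $\tilde\kappa(\omega)=\sum_{k\geq1}\tfrac{2}{s^2+k^2\pi^2}$, so that $0<\tilde\kappa(\omega)\leq\tfrac13$ for $\omega\geq0$ (using $\sum_{k\geq1}(k\pi)^{-2}=\tfrac16$, with equality only at $\omega=0$); and since $\kappa_{D,0}(\omega)\sim\sqrt{\omega D}$ as $\omega\to\infty$, the term $\tilde\kappa(\omega)\omega^2=\omega\,\kappa_{D,0}(\omega)$ dominates, whence $\Phi(\omega)\to+\infty$ independently of the signs of $a$ and $c$.

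For necessity: if $\Phi(0)<0$, continuity of $\Phi$ on $[0,\infty)$ together with $\Phi(\omega)\to+\infty$ yields, by the intermediate value theorem, some $\omega_0>0$ with $G_0(\omega_0)=0$, i.e.\ an eigenvalue with positive real part; and if $\Phi(0)=0$, a zero eigenvalue survives the mass constraint, so asymptotic stability fails. For sufficiency, assume $\Phi(0)>0$. I would first deduce $\fv>\fu$: if instead $\fv\leq\fu$, then $\fu\geq\fv>0$, and a short sign chase using $\qv<0$ and $\qv\leq\qu$ (distinguishing $\qu\geq0$ from $\qu<0$) gives $c\leq0$, while $\qV(\fv-\fu)\leq0$, contradicting $\Phi(0)>0$. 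Now with $a=\fv-\fu>0$ and $\qv<0$ we get $a-\qv>0$, so $\omega^2+\gamma(a-\qv)\omega\geq0$ for $\omega\geq0$. If $c\geq0$, every summand of $\Phi(\omega)$ is nonnegative and $\gamma^2\qV a>0$, hence $\Phi(\omega)>0$; if $c<0$, then $\tilde\kappa(\omega)\leq\tfrac13$ gives $\tilde\kappa(\omega)\gamma^2 c\geq\tfrac13\gamma^2 c$, so $\Phi(\omega)\geq\gamma^2\qV a+\tfrac13\gamma^2 c=\Phi(0)>0$. In either case $\Phi>0$ on $[0,\infty)$, i.e.\ stability, and $\fv>\fu$ has been established along the way.

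I expect the delicate points to be matters of care rather than depth: identifying $\tilde\kappa(0)=\tfrac13$ together with the bound $\tilde\kappa\leq\tfrac13$ (exactly the constant appearing in \eqref{eq:case1-stab2}), and the bookkeeping around the $\omega=0$ mode and \eqref{eq:mass-lin}, namely that the trivial eigenvalue is genuinely removed when $\Phi(0)\neq0$ but persists when $\Phi(0)=0$. The branch $V_{lm}=0$ meets $l=0$ only on the nowhere-dense set defined by \eqref{eq:cdt-Vlm0} and can be dealt with in passing, consistently with the remark following \eqref{eq:om-Vlm0}.
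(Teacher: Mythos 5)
Your argument is correct and follows essentially the same route as the paper's proof: reduce to $l=0$, factor $G_0(\omega)=\omega\,\Phi(\omega)$ (your $\Phi$ is exactly the paper's $\tilde G_0$), identify $\Phi(0)=\gamma^2\bigl(\tfrac13(\fu\qv-\fv\qu)+\qV(\fv-\fu)\bigr)$, handle the $\omega=0$ mode through the mass constraint via the same $3\times3$ determinant, and use $0<\tilde\kappa\le\tfrac13$ together with $\Phi\to+\infty$ for the two directions. The only cosmetic differences are that you obtain the bound on $\tilde\kappa$ from the partial-fraction expansion of $\coth$ rather than the paper's power-series computation of $\tilde\kappa'$, and you derive $\fv>\fu$ by a sign-case contradiction rather than the paper's algebraic rearrangement; both substitutions are sound.
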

\begin{proof}
As remarked above we have to consider $l=0$. We first restrict ourselves to the case $V_{00}\neq 0$. Then the system \eqref{eq:V}--\eqref{eq:flux1} is linearly asymptotically stable in $(u_*,v_*,V_*)$ if and only if 
\begin{align}
	G_0(\omega) \,&=\, \gamma \qV \big(\omega^2 + (-\fu +\fv )\gamma\omega\big) 
	+\kappa_{D,0}(\omega) \big(\omega^2 + (-\fu +\fv )\gamma\omega\big)\notag\\
	&\qquad -\kappa_{D,0}(\omega) \big(\gamma \qv \omega-\gamma^2\big(\fu \qv -\fv \qu \big)\big) \label{eq:G0}
\end{align}
has no zeroes in $[0,\infty)$. Let us first consider the case $\omega>0$. For convenience we then rewrite
\begin{align*}
	\kappa_{D,0}(\omega) \,&=\,  D
        \left(\frac{ri_0'(r)}{i_0(r)}\right)\Big|_{r=\sqrt{\frac{\omega}{D}}}\,=\,
        \omega \tilde{\kappa}\left(\sqrt{\frac{\omega}{D}}\right),\\ 
	\tilde{\kappa}(r)\,&:=\, \frac{i_0'(r)}{ri_0(r)}.
\end{align*}
Explicit evaluation of $\tilde{\kappa}$ gives that $\tilde{\kappa}'<0$. In fact we have
\begin{align*}
	i_0(r)\,&=\, \frac{\sinh r}{r},\quad i_0'(r)\,=\, \frac{r\cosh r -\sinh r}{r^2},\\
	\tilde{\kappa}(r)\,&=\, \frac{r\cosh r-\sinh r}{r^2\sinh r},
\end{align*}
hence
\begin{align*}
	\tilde{\kappa}'(r)r^3\sinh^2 r \,&=\, -r^2 + 2\sinh^2r -r\sinh r\cosh r\\
	&=\, -r^2 -1 + \cosh 2r -\frac{r}{2}\sinh 2r\\
	&=\, \sum_{k\geq 2} \frac{1}{(2k)!}(2r)^{2k}(1-\frac{k}{2}) \,<\, 0.
\end{align*}
Furthermore we obtain
\begin{align}
	\lim_{r\to 0}\tilde{\kappa}(r) \,=\, \frac{1}{3},\quad \lim_{r\to\infty} \tilde{\kappa}(r) \,=\, 0. \label{eq:lim-tkappa-lim}
\end{align}
For $\omega> 0$ the equation $G_0(\omega)=0$ is  equivalent to
\begin{align}
	0\,&=\, \tilde{\kappa}\left(\sqrt{\frac{\omega}{D}}\right)\Big(\omega^2
          +\gamma\omega(\fv -\fu -\qv )+\gamma^2(\fu \qv -\fv \qu
          )\Big) + \gamma \qV \omega + \gamma^2\qV (\fv -\fu )\,=:\,
          \tilde{G}_0(\omega). \label{eq:case1-stab}
\end{align}
By \eqref{eq:lim-tkappa-lim} and $\qV >0$ we deduce that
$\lim_{\omega\to\infty} \tilde{G}_0(\omega)=+\infty$. Using
\eqref{eq:lim-tkappa-lim} we evaluate
\begin{equation*}
  \lim_{\omega \to 0} \tilde{G}_0(\omega) =
  \frac{1}{3}\gamma^2(\fu \qv -\fv \qu ) + \gamma^2\qV (\fv -\fu ),
\end{equation*}
and we obtain that 
\begin{align}
	0\,&\leq\, \frac{1}{3}(\fu \qv -\fv \qu ) + \qV (\fv -\fu ). \label{eq:case1-stab2=}
\end{align}
is a necessary condition for $\tilde{G}_0>0$ on $(0,\infty)$.

Let us next consider the case $\omega=l=0$ which gives $G_0(0)=0$. In
this case $u,v,V$ are all constant and satisfy, by \eqref{eq:u-lin},
\eqref{eq:v-lin} and \eqref{eq:mass-lin},  
\begin{align*}
	0\,&=\, f_u u + f_v v,\\
	0\,&=\, q_u u + q_v v + q_V V,\\
	0\,&=\, 4\pi( u+ v) + \frac{4\pi}{3}V.
\end{align*}
This system has a nontrivial solution if and only if $0\,=\, \frac{1}{3}(\fu \qv -\fv \qu ) + \qV (\fv -\fu )$. Together with \eqref{eq:case1-stab2=} this property proves that \eqref{eq:case1-stab2} is necessary for the asserted stability. 

From \eqref{eq:case1-stab2} we deduce that
\begin{align*}
	0\,&<\, (\fv -\fu )(\qV -\frac{1}{3}\qv ) +\frac{1}{3}(\qv -\qu )\fv  
	\,\leq\,(\fv -\fu )(\qV -\frac{1}{3}\qv ).
\end{align*}
By \eqref{cdt:q} this implies \eqref{eq:fu-fv-full} and in particular that \eqref{eq:fu-fv-full} is necessary for $\tilde{G}_0>0$ on $[0,\infty)$.

We claim that \eqref{eq:case1-stab2} is also sufficient to exclude a
nonnegative zero of $G_0$. Again, we consider for $\omega>0$
\begin{equation*}
	\tilde{G}_0(\omega)\,=\,
        \tilde{\kappa}\left(\sqrt{\frac{\omega}{D}}\right)\Big(\omega^2
        +\gamma\omega(\fv -\fu -\qv )+\gamma^2(\fu \qv -\fv \qu
          )\Big) + \gamma \qV \omega + \gamma^2\qV (\fv -\fu ).
\end{equation*}
As \eqref{eq:case1-stab2} implies \eqref{eq:fu-fv-full} we
have by \eqref{cdt:q-star} that $\fv -\fu -\qv >0$ and $\qV(\fv-\fu)>0$. If we now assume 
\begin{equation*}
  \fu \qv -\fv \qu > 0,
\end{equation*}
we immediately conclude $\tilde{G}_0(\omega) > 0$ for all $\omega>0$. If on the other hand
\begin{equation*}
  \fu \qv -\fv \qu \le 0,
\end{equation*}
we remark that $\tilde{\kappa}$ is decreasing, hence $\tilde{\kappa}
\le \frac1{3}$ on $[0,\infty)$ and therefore
\begin{equation*}
  \tilde{G}_0(\omega) > \, \gamma^2\big(\frac{1}{3}(\fu \qv -\fv \qu
  )+\qV (\fv -\fu )\big) \,\geq\, 0
\end{equation*}
for all $\omega>0$, which proves the claim. We have already seen above that $0\neq \frac{1}{3}(\fu \qv -\fv \qu ) + \qV (\fv -\fu )$ is sufficient to exclude that constant perturbations $(u,v,V)$, corresponding to the case $\omega=0$, are solution of the linearized system.

It only remains to prove that \eqref{eq:case1-stab2} is sufficient to exclude an instability for $l=0$ in the case $V_{00}=0$. However, in this case by \eqref{eq:cdt-Vlm0},  \eqref{eq:om-Vlm0} we have $\omega=0$ and we are in the case that $(u,v,V)$ is constant, which is excluded by \eqref{eq:case1-stab2}, as we have seen above.

\end{proof}
\subsection{Instability conditions}
We next characterize instabilities of our system in a spatially homogeneous stationary point $(u_*,v_*,V_*)$ as above under general perturbations.
\begin{theorem}\label{thm:instab}
Assume that \eqref{eq:case1-stab2} is satisfied and that
\begin{align}
  0\,&> (\gamma \qV +Dl) \big(dl^2(l+1)^2 +\gamma l(l+1)(-d\fu +\fv )\big) \notag\\
  &\qquad -D \gamma \qv l^2(l+1) + Dl\gamma^2 (\fu \qv -\fv \qu ) \label{eq:ca1-instab}
\end{align}
holds in $(u_*,v_*,V_*)$. Then the system  \eqref{eq:V}--\eqref{eq:flux1} is linearly asymptotically unstable in $(u_*,v_*,V_*)$.

If \eqref{eq:case1-stab2} and $d=1$
hold 
then \eqref{eq:ca1-instab} is also necessary for an instability.
\end{theorem}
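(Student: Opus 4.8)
The plan is to work throughout with the function $G_l$ from \eqref{eq:eigenvalue-eq}: by the characterisation preceding Proposition~\ref{prop:stab}, for $l\ge 1$ and $V_{lm}\neq 0$ the homogeneous state has an eigenvalue with positive real part coming from the $l$-th mode exactly when $G_l$ has a zero in $(0,\infty)$. The starting point is that the right-hand side of \eqref{eq:ca1-instab} is precisely $G_l(0)$: by \eqref{eq:2.24} together with \eqref{eq:psi-lm-0} one has $\kappa_{D,l}(0)=Dl$, and substituting $\omega_{lm}=0$ into \eqref{eq:eigenvalue-eq} and collecting terms reproduces the right-hand side of \eqref{eq:ca1-instab}; so \eqref{eq:ca1-instab} is the statement $G_l(0)<0$. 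Write $s:=l(l+1)$, $a:=\gamma(\fv-\fu)$, $b:=-\gamma\qv$, $c:=\gamma^2(\fu\qv-\fv\qu)$, and $P_l(\omega):=\omega^2+\bigl((d+1)s+a\bigr)\omega+ds^2+\gamma s(\fv-d\fu)$, $Q_l(\omega):=-\gamma\qv(s+\omega)+\gamma^2(\fu\qv-\fv\qu)$, so that $G_l(\omega)=\gamma\qV P_l(\omega)+\kappa_{D,l}(\omega)\bigl(P_l(\omega)+Q_l(\omega)\bigr)$, with $P_l$ and $P_l+Q_l$ behaving like $\omega^2$ at infinity. I also record that, by Proposition~\ref{prop:stab}, \eqref{eq:case1-stab2} forces $\fv>\fu$, hence $a>0$; that $b>0$ by \eqref{cdt:q-star}; and that \eqref{eq:case1-stab2} rewritten reads $c+3\gamma\qV a>0$.

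For the sufficiency direction, assume \eqref{eq:case1-stab2} and \eqref{eq:ca1-instab}, i.e.\ $G_l(0)<0$, for some $l\ge 1$. Since $\qV>0$ and $\kappa_{D,l}(\omega)>0$ for $\omega>0$ (positivity of $i_l$ and $i_l'$, cf.\ \cite{DLMF}), while $P_l(\omega)\to+\infty$ and $P_l(\omega)+Q_l(\omega)\to+\infty$, we get $G_l(\omega)\to+\infty$. As $G_l$ is continuous on $[0,\infty)$, the intermediate value theorem yields a zero $\omega_*>0$, and by the characterisation preceding Proposition~\ref{prop:stab} this is an eigenvalue with positive real part, so the state is linearly unstable. (The non-generic cases $\omega=0$ and $V_{lm}=0$ need not be invoked here.)

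For the necessity direction, assume in addition $d=1$ and argue by contraposition: suppose $G_l(0)\ge 0$ for every $l\ge 1$; I claim the state is stable. The case $l=0$ is Proposition~\ref{prop:stab}. In the exceptional case $V_{lm}=0$ one checks, when $d=1$, that \eqref{eq:cdt-Vlm0} forces $\fu\qv=\fv\qu$ or $\qu=\qv$, and then \eqref{eq:om-Vlm0} together with $\fv>\fu$ and \eqref{cdt:q-star} gives $\omega<0$ in every case (a short case distinction; in any event this parameter set is nowhere open, cf.\ the remark after \eqref{eq:om-Vlm0}). So it suffices to show $G_l(\omega)>0$ for all $\omega>0$ and all $l\ge 1$. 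With $d=1$ one has $P_l(\omega)=(\omega+s)(\omega+s+a)>0$ and $P_l(\omega)+Q_l(\omega)=\omega^2+(2s+a+b)\omega+\bigl(s^2+(a+b)s+c\bigr)$. If $P_l(\omega)+Q_l(\omega)\ge 0$, then $G_l(\omega)\ge\gamma\qV P_l(\omega)>0$ since $\kappa_{D,l}(\omega)>0$. If $P_l(\omega)+Q_l(\omega)<0$, I use the pointwise bound $\kappa_{D,l}(\omega)\le Dl+\tfrac13\omega$: from the recurrence $r\,i_l'(r)=r\,i_{l+1}(r)+l\,i_l(r)$ one gets $\kappa_{D,l}(\omega)=Dl+\omega\,\bigl(i_{l+1}(r)/(r\,i_l(r))\bigr)\big|_{r=\sqrt{\omega/D}}$, and the Tur\'an-type inequality $i_{l-1}(r)i_{l+1}(r)\le i_l(r)^2$ (a consequence of the log-convexity of $\nu\mapsto I_\nu(r)$) makes $i_{l+1}/i_l$ non-increasing in $l$, so $i_{l+1}(r)/(r\,i_l(r))\le i_1(r)/(r\,i_0(r))=\tilde\kappa(r)\le\tfrac13$, the last bound being the one established in the proof of Proposition~\ref{prop:stab}. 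Since $P_l+Q_l<0$, this yields $G_l(\omega)\ge L(\omega):=\gamma\qV P_l(\omega)+\bigl(Dl+\tfrac13\omega\bigr)\bigl(P_l(\omega)+Q_l(\omega)\bigr)$. Expanded as a polynomial in $\omega$, $L$ has leading coefficient $\tfrac13$, $\omega^2$-coefficient $\gamma\qV+Dl+\tfrac13(2s+a+b)>0$, constant term equal to $G_l(0)\ge 0$, and $\omega$-coefficient $\gamma\qV(2s+a)+Dl(2s+a+b)+\tfrac13\bigl(s^2+(a+b)s+c\bigr)$, which is strictly positive because $\tfrac13 c>-\gamma\qV a$ by \eqref{eq:case1-stab2} while all remaining terms are positive. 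Hence $L(\omega)>0$ for $\omega>0$, so $G_l(\omega)>0$, completing the contraposition.

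The step I expect to be the main obstacle is this last one: excluding zeros of $G_l$ on the bounded $\omega$-interval where the bracket $P_l+Q_l$ is negative. The crude estimate $\kappa_{D,l}(\omega)\le Dl+\tfrac13\omega$ turns out to be exactly strong enough — it is sharp at $\omega=0$, which is precisely why $G_l(0)$ reappears as the constant term of the majorising cubic $L$, and \eqref{eq:case1-stab2} is exactly what keeps the remaining coefficients of $L$ positive; getting this bookkeeping right is the delicate part. One must also not forget to dispose of the non-generic cases $\omega=0$ (a zero eigenvalue sitting on the boundary $G_l(0)=0$, which is not counted as an instability) and $V_{lm}=0$ as indicated above.
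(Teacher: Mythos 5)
Your proof is correct, and its skeleton coincides with the paper's: you identify the right-hand side of \eqref{eq:ca1-instab} with $G_l(0)$ via $\kappa_{D,l}(0)=Dl$, get sufficiency from $G_l(\omega)\to+\infty$ and the intermediate value theorem, and get necessity for $d=1$ from the bound $\kappa_{D,l}(\omega)\le Dl+\tfrac13\omega$, which is exactly the paper's inequality \eqref{eq:kappaEst}. The necessity step is, however, organized differently in a way worth noting. The paper splits on the sign of $\fu\qv-\fv\qu$ and shows $G_l(\omega)\ge G_l(0)$ directly, which requires in addition the monotonicity $\kappa_{D,l}'>0$ (imported from Gronwall's 1932 result on $rI_{l+1/2}'/I_{l+1/2}$); you instead split on the sign of the bracket $P_l+Q_l$ multiplying $\kappa_{D,l}$, so that in one case positivity of $\kappa_{D,l}$ suffices and in the other only the upper bound \eqref{eq:kappaEst} is needed, after which you majorize $G_l$ by an explicit cubic $L$ with nonnegative coefficients (the constant term being $G_l(0)$ and the $\omega$-coefficient positive precisely by \eqref{eq:case1-stab2}). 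This dispenses with one of the Bessel-function inputs entirely. You also rederive \eqref{eq:kappaEst} from the recurrence $ri_l'=ri_{l+1}+li_l$ together with the Tur\'an-type inequality $i_{l-1}i_{l+1}\le i_l^2$, reducing to the explicit $l=0$ computation of $\tilde\kappa\le\tfrac13$ already done for Proposition \ref{prop:stab}, where the paper simply cites \cite{IfSi90,Ko12}; both are legitimate, yours being more self-contained. Finally, in the exceptional case $V_{lm}=0$ you correctly observe that \eqref{eq:cdt-Vlm0} with $d=1$ admits the alternative $q_u=q_v$ in addition to $\fu\qv=\fv\qu$, and dispose of it via \eqref{eq:om-Vlm0} and $\fv>\fu$ --- a point the paper's own proof glosses over.
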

\begin{proof}
We again first restrict to the case $V_{lm}\neq 0$. Note that the right-hand side of \eqref{eq:ca1-instab} coincides with $G_l(0)$ as defined in \eqref{eq:eigenvalue-eq}. 
In order to show the existence of an instability, we prove that there
is a positive zero $\omega_{lm} > 0$ of $G_l$. The modified Bessel
function of the first kind $i_l$ have by \cite[10.52.1,
10.52.5]{DLMF}, \cite{OLBC10} the asymptotic expansions
\begin{align*}
	i_l(r)\,\approx\, \frac{1}{2r}e^r\quad\text{ as }r\to\infty,\qquad
	i_l(r)\,\approx\, \frac{1}{(2l+1)!}r^l\quad\text{ as }r\to 0.
\end{align*}
This implies 
\begin{align}
	\kappa_{D,l}(\omega) =
        D\left(\frac{ri_l'(r)}{i_l(r)}\right)\Big|_{r=\sqrt{\frac{\omega}{D}}}
        \,&\to\, D l \text{ as }\omega\to 0, \label{eq:kappa0} 
\end{align}
and $\kappa_{D,l}(\omega)\to \infty$ as $\omega\to\infty$ hence
\begin{align*}
	\lim_{\omega \to \infty} G_l(\omega) \,&=\, +\infty.
\end{align*}
We therefore obtain that \eqref{eq:ca1-instab} is sufficient to guarantee a solution $\omega>0$ of $G_l(\omega)=0$. 
It remains to prove that \eqref{eq:ca1-instab} is also necessary if $d=1$ holds. 
We will need some information on the derivative of $\kappa_{D,l}$. We start by observing that by \eqref{eq:2.24}
\begin{align*}
	\kappa_{D,l}'(\omega)\,=\, \Big[\frac{1}{2r}\Big(\frac{r i_l'(r)}{i_l(r)}\Big)'\Big]_{r=\sqrt{\frac{\omega}{D}}}.
\end{align*}
By the definition of $i_l$ we compute
\begin{align*}
	\frac{r i_l'(r)}{i_l(r)} \,=\, -\frac{1}{2} + \frac{rI_{l+1/2}'(r)}{I_{l+1/2}(r)}.
\end{align*}
By \cite{Gr32} however we know that the quotient on the right-hand
side has strictly positive derivative on $\R^+$. This implies that
also $\kappa_{D,l}'>0$. Moreover, from \cite{IfSi90,Ko12} we obtain
\begin{equation*}
  \frac{r i_l'(r)}{i_l(r)} \le l + \frac1{3}r^2
\end{equation*}
which yields
\begin{equation}
  \label{eq:kappaEst}
  \kappa_{D,l}(\omega) \le lD + \frac1{3}\omega = \kappa_{D,l}(0) +
  \frac1{3}\omega 
\end{equation}
for all $\omega>0$.

This information implies by \eqref{eq:eigenvalue-eq},
\eqref{eq:fu-fv-full}, and $d=1$ that 
\begin{align}
  G_l(\omega)\,&\geq\, \gamma \qV \Big(dl^2(l+1)^2 +\gamma
  l(l+1)(-d\fu +\fv ) + (-\fu + \fv) \gamma \omega \Big) \notag\\
  &\quad +\kappa_{D,l}(\omega) \Big(dl^2(l+1)^2 +\gamma l(l+1)(-d\fu +\fv
  )\Big)\notag\\
  &\quad +\kappa_{D,l}(\omega) \Big(-\gamma \qv
  \big(l(l+1)+\omega_{lm}\big)+\gamma^2\big(\fu \qv -\fv \qu
  \big)\Big)\notag\\
  &\geq G_l(0) + \gamma^2 \qV (\fv - \fu) \omega
  + \gamma^2\big(\kappa_{D,l}(\omega)- \kappa_{D,l}(0)\big)(\fu\qv - \fv \qu). \label{eq:2.39bis}
\end{align}
In the case $(\fu\qv - \fv \qu)>0$ this yields $ G_l(\omega) \ge  G_l(0)$ for all $\omega>0$, as $\kappa_{D,l}$ is increasing (see above). In the case $(\fu\qv - \fv \qu) \le 0$ we obtain from \eqref{eq:2.39bis} that
\begin{align*}
  G_l(\omega) & \hspace{-2.2mm}\stackrel{\eqref{eq:case1-stab2}}{\geq} G_l(0)
  -\gamma^2(\fu\qv - \fv \qu)\Big(\frac1{3}\omega - \kappa_{D,l}(\omega) +
  \kappa_{D,l}(0)\Big) \stackrel{\eqref{eq:kappaEst}}{\geq} G_l(0).
\end{align*}
for all $\omega>0$.
From this we conclude that $G_l(0)<0$ is also necessary for the
existence of an instability with $V_{lm}\neq 0$. 

It remains to consider the possibility that an instability with $V_{lm}=0$ exists. In this case we obtain from \eqref{eq:cdt-Vlm0} and $d=1$ that $-f_uq_v+f_vq_u=0$. But then \eqref{eq:om-Vlm0} implies that $\omega\leq 0$.
\end{proof}
\begin{corollary}\label{cor:instab}
Assume \eqref{eq:case1-stab2}. Then the instability condition \eqref{eq:ca1-instab} holds if the conditions of Case 1 or Case 2 below are satisfied and if $D>0$ is chosen sufficiently large.
\begin{itemize}[labelindent=*,leftmargin=*]
  \item
    \text{\rm Case 1:}
    \begin{align}
      f_u q_v - f_vq_u \,\geq\, 0, \label{eq:Ca1-1}\\
      df_u -f_v + q_v \,> 0, \label{eq:Ca1-2}\\
      Q\,:=\, (df_u -f_v + q_v)^2 - 4d(f_u q_v-f_v q_u) \,> 0, \label{eq:Ca1-3}
    \end{align}
    and there exists an $l\in\N$ with
    \begin{align}
      \lambda_- \,<\, \frac{l(l+1)}{\gamma} \,<\, \lambda_+, \label{eq:Ca1-4}
    \end{align}
    where 
    \begin{align}
      \lambda_{\pm} \,=\, \frac{1}{2d}\Big( df_u -f_v + q_v \pm \sqrt{Q}\Big). \label{eq:Ca1-5}
    \end{align}
  \item
    \text{\rm Case 2:}
    \begin{align}
      f_u q_v - f_vq_u \,<\, 0 \label{eq:Ca2-1}
    \end{align}
    and there exists an $l\in\N$ with
    \begin{align}
      \frac{l(l+1)}{\gamma} \,<\, \lambda_+, \label{eq:Ca2-2}
    \end{align}
    where $\lambda_+$ is as defined in \eqref{eq:Ca1-5}.
  \end{itemize}
\end{corollary}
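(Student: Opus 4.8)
The plan is to exploit that the right-hand side of \eqref{eq:ca1-instab} --- which by Theorem \ref{thm:instab} equals $G_l(0)$ as in \eqref{eq:eigenvalue-eq} --- is an \emph{affine} function of $D$, and to choose the parameters so that its slope in $D$ is strictly negative; then for $D$ large the whole expression is negative and Theorem \ref{thm:instab} applies. Concretely, writing $s:=l(l+1)$ and multiplying out the product in \eqref{eq:ca1-instab}, the right-hand side takes the form $D\,\tilde P(l)+C(l)$ with
\begin{align*}
	\tilde P(l)\,:=\,l\Big(ds^2-\gamma s(f_u d-f_v+q_v)+\gamma^2(f_uq_v-f_vq_u)\Big),\qquad C(l)\,:=\,\gamma q_V\big(ds^2+\gamma s(-f_u d+f_v)\big),
\end{align*}
and $C(l)$ does not depend on $D$. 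Hence it suffices to produce a single $l\in\N$ with $\tilde P(l)<0$: for any such $l$ and any $D$ with $D\,|\tilde P(l)|>C(l)$ (any $D>0$ will do if $C(l)\le 0$) one has $D\tilde P(l)+C(l)<0$, i.e.\ \eqref{eq:ca1-instab}, and the stated instability follows from Theorem \ref{thm:instab}. Recall that \eqref{eq:case1-stab2}, assumed throughout, is exactly what guarantees stability against spatially homogeneous perturbations, so that \eqref{eq:ca1-instab} indeed yields a genuinely \emph{diffusive} instability.

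Next I would carry out the substitution $s=\gamma\mu$, i.e.\ $\mu=l(l+1)/\gamma$. A short computation gives $\tilde P(l)=l\gamma^2\,g(\mu)$, where
\begin{align*}
	g(\mu)\,:=\,d\mu^2-(f_u d-f_v+q_v)\mu+(f_uq_v-f_vq_u).
\end{align*}
Since $l\gamma^2>0$, the inequality $\tilde P(l)<0$ is equivalent to $g(\mu)<0$. The discriminant of $g$ is precisely the quantity $Q$ of \eqref{eq:Ca1-3}, and whenever $Q>0$ the two roots of $g$ are exactly the numbers $\lambda_\pm$ of \eqref{eq:Ca1-5} (their sum is $(f_u d-f_v+q_v)/d$ and their product is $(f_uq_v-f_vq_u)/d$, matching the coefficients of $g$); as the leading coefficient $d$ is positive, $g<0$ precisely on the open interval $(\lambda_-,\lambda_+)$. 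So the task reduces, in each of the two cases, to checking that $(\lambda_-,\lambda_+)$ is nonempty and that the hypothesis on $l$ places $l(l+1)/\gamma$ inside it.

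In Case 1, \eqref{eq:Ca1-3} gives $Q>0$, so $\lambda_\pm$ are real and distinct; moreover \eqref{eq:Ca1-1} makes the product of the roots nonnegative and \eqref{eq:Ca1-2} makes their sum positive, so $0\le\lambda_-<\lambda_+$ and $(\lambda_-,\lambda_+)$ is a nonempty subinterval of $(0,\infty)$. Then \eqref{eq:Ca1-4} says $\mu=l(l+1)/\gamma\in(\lambda_-,\lambda_+)$, hence $g(\mu)<0$ and $\tilde P(l)<0$. In Case 2, \eqref{eq:Ca2-1} forces $f_uq_v-f_vq_u<0$, so $-4d(f_uq_v-f_vq_u)>0$ and therefore $Q>0$ automatically; moreover the product $\lambda_+\lambda_-=(f_uq_v-f_vq_u)/d$ is negative, so $\lambda_-<0<\lambda_+$ and $[0,\lambda_+)\subset(\lambda_-,\lambda_+)$. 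Since $l(l+1)/\gamma>0$, the assumption \eqref{eq:Ca2-2} again places $\mu=l(l+1)/\gamma$ in $(\lambda_-,\lambda_+)$, so $g(\mu)<0$ and $\tilde P(l)<0$. In either case we have found an admissible wave number $l$ with $\tilde P(l)<0$, and by the first paragraph \eqref{eq:ca1-instab} then holds for all sufficiently large $D$, proving the corollary.

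The argument is essentially bookkeeping; the one place that requires care is the sign analysis of the roots $\lambda_\pm$ --- in particular verifying that $Q>0$ is automatic in Case 2 and that \eqref{eq:Ca1-1}--\eqref{eq:Ca1-2} force $\lambda_\pm\ge 0$ in Case 1, so that the ``resonance window'' $(\lambda_-,\lambda_+)$ really is a set of admissible values $l(l+1)/\gamma$. The only structural observation needed is that \eqref{eq:ca1-instab} is affine in $D$ with slope $l\gamma^2\,g(l(l+1)/\gamma)$, which reduces the whole statement to the scalar quadratic inequality $g(\mu)<0$ that \eqref{eq:Ca1-5} already solves.
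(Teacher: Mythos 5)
Your proof is correct and follows essentially the same route as the paper: both identify the coefficient of $D$ in \eqref{eq:ca1-instab} (the quantity $e$ in the paper, your $\tilde P(l)/l$), show it is negative exactly under Case 1 or Case 2 via the roots $\lambda_\pm$ of the associated quadratic, and conclude by dominance for $D\gg 1$ together with Theorem \ref{thm:instab}. You merely spell out the sign analysis of $\lambda_\pm$ (and the automatic positivity of $Q$ in Case 2) that the paper leaves implicit.
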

\begin{proof}
In order to evaluate \eqref{eq:ca1-instab} for $D\gg 1$ we consider the coefficient of $Dl$, that is
\begin{align}
    e\,:=\, dl^2(l+1)^2 +  \gamma l(l+1)(- df_u + f_v - q_v)
    + \gamma^2 (f_u q_v - f_v q_u). \label{eq:corr-2}
\end{align}
In Case 1 the last term on the left-hand side is by \eqref{eq:Ca1-1} nonnegative and $e<0$ holds if and only if the conditions \eqref{eq:Ca1-2}--\eqref{eq:Ca1-4} are satisfied.\\
  In Case 2 the last term on the left-hand side of \eqref{eq:corr-2} is negative  and $e<0$ holds if and only if the condition \eqref{eq:Ca2-2} is satisfied.
Therefore $e<0$ holds if and only if Case 1 or Case 2 are satisfied.
We now observe that the term $Dle$ becomes dominant in \eqref{eq:ca1-instab} for $D\gg 1$ and we deduce from Theorem \ref{thm:instab} that if $e<0$ then for $D$ sufficiently large an instability exists.
\end{proof}
\begin{remark} (1) For $d=1$ we deduce from Theorem \ref{thm:instab} and \eqref{eq:case1-stab2}, \eqref{eq:ca1-instab} that a diffusive instability exists if and only if
\begin{align*}
  0\,&> (\gamma \qV +Dl) \big(l^2(l+1)^2 +\gamma l(l+1)(-\fu +\fv )\big) 
   -D \gamma \qv l^2(l+1) + Dl\gamma^2 (\fu \qv -\fv \qu )
\end{align*}
holds. By \eqref{eq:fu-fv-full} we deduce that \eqref{eq:Ca2-1} is necessary and that only Case 2 is a possible scenario for an instability. By Corollary \ref{cor:instab} this condition and \eqref{eq:Ca2-2} for an $l\in\N$ are also sufficient to ensure, for $D$ sufficiently large, an instability. In particular, for $d=1$ there exist parameter values such that the system has a diffusive instability.
\medskip

\noindent (2) Assume \eqref{eq:case1-stab2}. Then we observe from \eqref{eq:eigenvalue-eq} that perturbations in directions of eigenvectors $\varphi_{lm}$ decay for all sufficiently large $l\in\N$.
\medskip

\noindent (3) Case 1 or Case 2 in Corollary \ref{cor:instab} are sufficient but not necessary for an instability. A third case may arise for $d\gg 1$ and $D$ sufficiently small. In fact, even if the factor that multiplies $\kappa_{D,l}(\omega)$ in \eqref{eq:eigenvalue-eq} is positive, this term might be dominated by the first line in \eqref{eq:eigenvalue-eq}, which becomes negative if $df_u-f_v \gg 1 $. As we are mostly interested in $d=1$ we do not investigate this case further.
\end{remark}
\begin{remark}\label{rem:stab-ode}
We finally would like to relate the distinction between Case 1 and Case 2 instabilities, given by the inequalities \eqref{eq:Ca1-1} and \eqref{eq:Ca2-1}, to the stability properties of the zero lateral diffusion reduction of the full system \eqref{eq:V}--\eqref{eq:flux1}. This reduction is given by choosing $d_u=d_v=0$ in the dimensional formulation of our system, see \eqref{eq:f-app} and \eqref{eq:q-app} in the appendix and leads to the system
\begin{align}
  \label{eq:Vode}
  \pd_t  V &=  D \Delta V \quad &&\text{in} \quad
  B  \times  I,\\
  \label{eq:uode}
  \pd_t  u &=  \gamma f(u,v)\quad &&\text{on} \quad
  \Gamma \times  I,\\
  \label{eq:vode}
  \pd_t  v &=  \gamma (- f(u,v) + q(u,v,V)) \quad &&\text{on}
  \quad  \Gamma \times  I,\\  
  -D \nabla V \cdot \nu &= \gamma q(u,v,V)  \quad &&\text{on} \quad \Gamma\times I.\label{eq:flux1ode}
\end{align}
An instability of the corresponding system is then characterized by the existence of a positive root $\omega_{lm}$ of
\begin{align}
	0 \,&=\, \gamma \qV \big(\omega_{lm}^2 + (-\fu +\fv )\gamma\omega_{lm}\big) \notag\\
	&\qquad +\kappa_{D,l}(\omega_{lm}) \Big(\omega_{lm}^2 +(-\fu +\fv-\qv )\gamma\omega_{lm}+\gamma^2\big(\fu \qv -\fv \qu \big)\Big)\label{eq:eigenvalue-eqode}
\end{align}
with $\kappa_{D,l}$ as in \eqref{eq:2.24}. We therefore see that under
the stability assumption \eqref{eq:case1-stab2} in Case 1, i.e. if
\eqref{eq:Ca1-1} holds, the system
\eqref{eq:Vode}--\eqref{eq:flux1ode} is stable, whereas for Case 2,
i.e. if \eqref{eq:Ca2-1} holds, the system is unstable for $D\gg 1$
and $\gamma$ chosen large enough. This shows that the second
instability mechanism is not induced by the membrane diffusion but
rather by the cytosolic diffusion. See Section \ref{sec:discuss} for a further
discussion.
\end{remark}
\section{Stability Analysis for the non-local reduction $D\to\infty$}\label{sec:turingNonLocalA}
By formally letting $D \to \infty$ in \eqref{eq:V}--\eqref{eq:flux1} one obtains the
following reduced two-variable system
\begin{align}
  \partial_t u \,&=\, \Delta_\Gamma u + \gamma f(u,v), \label{eq:u-gen}\\
  \partial_t v \,&=\, d\Delta_\Gamma v + \gamma \left(-f(u,v) +
    q(u,v,V[u+v])\right), \label{eq:v-gen}
\end{align}
where $V[u+v]$ is the \emph{non-local} functional
\begin{gather}
  V[u+v]\,=\, V_{init} - c \int_{\Gamma} (u + v) \dg, \label{eq:Vnew}
\end{gather}
with $V_{init}>0$ given and $c:=\frac1{|B|}$. Note that $V_{init}$ is determined
by the total mass of GTPase, which is constant in time. The system \eqref{eq:u-gen}--\eqref{eq:Vnew} has already been considered in \cite{RaRo12} and has, compared to the fully coupled system, the advantage of having one fixed domain of definition (the membrane $\Gamma$). The remnant of the spatial coupling in the full system is the non-locality, introduced by the specific form of $V=V[u+v]$. In \cite{RaRo12} we have, among other things, presented a stability analysis, which however was not complete in the characterization of instabilities \cite{RaRo13}. Here we complete that discussion and obtain a characterization that coincides with the behavior of \eqref{eq:V}--\eqref{eq:flux1} for large cytosolic diffusion constant $D$. Moreover we obtain some additional properties of instabilities that are more difficult to characterize for the fully coupled system.

In the following stability analysis, in contrast to the one of the full system, we do not need to restrict ourselves to spherical cell shapes. We therefore fix an arbitrary open, bounded domain $B\subset\R^3$ with smooth connected boundary $\Gamma=\partial B$. We assume again that $f$ and $q$ satisfy \eqref{cdt:q}, consider a spatially homogeneous stationary point $(u_*,v_*)$ of \eqref{eq:u-gen}--\eqref{eq:Vnew}, and set $V_*:=V[u_*,v_*]$. Then $(u_*,v_*)$ is also a stationary point of the ODE reduction of \eqref{eq:u-gen}, \eqref{eq:Vnew},
\begin{align}
  \partial_t u \,&=\, \gamma f(u,v), \label{eq:u-gen-ode}\\
  \partial_t v \,&=\, \gamma \left(-f(u,v) + q(u,v,V_1(u+v))\right), \label{eq:v-gen-ode}
\end{align}
where
\begin{gather*}
  V_1(u+v)\,=\,V_{init} -c|\Gamma|(u+v).
\end{gather*}
Note that $V_1$ is just a (non-local) real function and that $V_1'=-c|\Gamma|<0$. Again it is convenient to introduce the notation
\begin{align*}
	f_u\,&:=\, \partial_u f(u_*,v_*),\quad f_v\,:=\, \partial_v f(u_*,v_*),\\
	q_u\,&:=\,\partial_u q(u_*,v_*,V_*)\quad q_u\,:=\,\partial_v q(u_*,v_*,V_*),\quad
	q_V\,:=\,\partial_V q(u_*,v_*,V_*).
\end{align*}

The stability of the ODE system  \eqref{eq:u-gen-ode}, \eqref{eq:v-gen-ode} in $(u_*,v_*)$ is equivalent to the conditions
\begin{align}
  0\,&>\, f_u -f_v  + q_v + q_VV_1', \label{eq:Tu-gen-1}\\
  0\,&<\, f_u(q_v+q_VV_1') - f_v (q_u+q_VV_1'). \label{eq:Tu-gen-2}
\end{align}
This also corresponds to the stability of \eqref{eq:u-gen},
\eqref{eq:v-gen} in $(u_*,v_*)$ with respect to spatially homogeneous
perturbations. We remark that \eqref{cdt:q} and \eqref{eq:Tu-gen-2} imply that
\begin{align*}
	0\,&<\, f_uq_v -f_vq_u +q_V V_1'(f_u-f_v)\,=\, (f_u-f_v)(q_v+q_V V_1') + f_v(q_v-q_u)\\
	&\leq\, (f_u-f_v)(q_v+q_V V_1'),
\end{align*}
which by \eqref{cdt:q} yields that
\begin{gather}
	f_u\,<\, f_v. \label{eq:fu-fv}
\end{gather}
In particular we see that under the assumption \eqref{cdt:q} the inequality \eqref{eq:Tu-gen-2} already implies \eqref{eq:Tu-gen-1}.

We remark that \eqref{eq:Tu-gen-2} coincides in the case of a spherical cell, i.e. $\Gamma=S^2\subset\R^3$ with the stability condition \eqref{eq:case1-stab2} for $D<\infty$. In fact, in this case we have $c|\Gamma| = \frac{4\pi}{4\pi/3}\,=\, 3$ and we obtain for the right-hand side in \eqref{eq:Tu-gen-2} that
\begin{align*}
	f_u(q_v+q_VV_1') - f_v (q_u+q_VV_1')\,=\, f_uq_v-f_vq_u  - 3 q_V( f_u -f_v)
\end{align*}
and the equivalence of \eqref{eq:Tu-gen-2} and \eqref{eq:case1-stab2} follows.

For the instability of \eqref{eq:u-gen}--\eqref{eq:v-gen} in $(u_*,v_*)$ we obtain the following characterization.
\begin{proposition}\label{prop:inst-new}
   Assume that conditions \eqref{cdt:q} and \eqref{eq:Tu-gen-1},
  \eqref{eq:Tu-gen-2} hold. Then the system \eqref{eq:u-gen},
  \eqref{eq:v-gen} is unstable in $(u_*,v_*)$ if and only if in this point either the conditions from Case 1 or  Case 2 below are satisfied:
\begin{itemize}[labelindent=*,leftmargin=*]
  \item
    \text{\rm Case 1:}
    \begin{align}
      f_u q_v - f_vq_u \,\geq\, 0,\label{eq:Ca1-1-Dinfty}\\
      df_u -f_v + q_v \,> 0, \label{eq:Ca1-2-Dinfty}\\
      Q\,:=\, (df_u -f_v + q_v)^2 - 4d(f_u q_v-f_v q_u) \,> 0, \notag
    \end{align}
    and there exists an eigenvalue $\mu>0$ of $-\Delta_{\Gamma}$ with
    \begin{align*}
      \lambda_- \,<\, \frac{\mu}{\gamma} \,<\, \lambda_+,
    \end{align*}
    where 
    \begin{align}
      \lambda_{\pm} \,=\, \frac{1}{2d}\Big( df_u -f_v + q_v \pm \sqrt{Q}\Big). \label{eq:Ca1-5-Dinfty}
    \end{align}
  \item
    \text{\rm Case 2:}
    \begin{align}
      f_u q_v - f_vq_u \,<\, 0 \label{eq:Ca2-1-Dinfty}
    \end{align}
    and there exists an eigenvalue $\mu>0$ of $-\Delta_{\Gamma}$ with
    \begin{align}
      \frac{\mu}{\gamma} \,<\, \lambda_+, \label{eq:Ca2-2-Dinfty}
    \end{align}
    where $\lambda_+$ is as defined in \eqref{eq:Ca1-5-Dinfty}.
  \end{itemize}
\end{proposition}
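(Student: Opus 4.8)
The plan is to diagonalize the linearization of \eqref{eq:u-gen}, \eqref{eq:v-gen} at $(u_*,v_*)$ in the eigenbasis of $-\Delta_\Gamma$ and to reduce the instability question to an elementary sign discussion of a quadratic polynomial, exactly as in the classical Turing analysis. Write $0=\mu_0<\mu_1\le\mu_2\le\cdots$ for the eigenvalues of $-\Delta_\Gamma$ on $\Gamma$ with $L^2(\Gamma)$-orthonormal eigenfunctions $\varphi_k$; since $\Gamma$ is connected, $\varphi_0$ is constant and $\int_\Gamma\varphi_k\dg=0$ for all $k\ge1$. Inserting $u=u_*+\sum_k a_k(t)\varphi_k$, $v=v_*+\sum_k b_k(t)\varphi_k$ into \eqref{eq:u-gen}, \eqref{eq:v-gen}, linearizing, and using that $V[u+v]-V_*=-c\int_\Gamma(u+v)\dg$ depends only on $a_0,b_0$ and produces a \emph{spatially constant} perturbation of $V$, one sees that the linearized flow decouples mode by mode: the $k=0$ block is exactly the ODE reduction \eqref{eq:u-gen-ode}, \eqref{eq:v-gen-ode}, which is asymptotically stable by the hypotheses \eqref{eq:Tu-gen-1}, \eqref{eq:Tu-gen-2}, while for $k\ge1$ the pair $(a_k,b_k)$ solves $\frac{\d}{\d t}(a_k,b_k)^{\mathrm T}=A_{\mu_k}(a_k,b_k)^{\mathrm T}$ with
\[
A_\mu=\begin{pmatrix} -\mu+\gamma f_u & \gamma f_v\\ \gamma(q_u-f_u) & -d\mu+\gamma(q_v-f_v)\end{pmatrix}.
\]
Hence \eqref{eq:u-gen}, \eqref{eq:v-gen} is linearly unstable at $(u_*,v_*)$ if and only if $A_\mu$ has an eigenvalue with positive real part for some eigenvalue $\mu=\mu_k>0$ of $-\Delta_\Gamma$; since $\det A_\mu\to+\infty$ and $\operatorname{tr}A_\mu\to-\infty$ as $\mu\to\infty$, only finitely many $\mu_k$ can contribute, so this criterion is complete.

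Next I would exclude oscillatory (Hopf-type) instabilities. One has $\operatorname{tr}A_\mu=-(1+d)\mu+\gamma(f_u-f_v+q_v)$, and since \eqref{cdt:q} together with \eqref{eq:Tu-gen-2} force $f_u<f_v$, i.e.\ \eqref{eq:fu-fv}, while $q_v\le0$ by \eqref{cdt:q}, we get $f_u-f_v+q_v<0$ and therefore $\operatorname{tr}A_\mu<0$ for every $\mu\ge0$. Consequently $A_\mu$ has an eigenvalue with positive real part if and only if $\det A_\mu<0$. A direct computation gives
\[
\det A_\mu=d\mu^2-\gamma(df_u-f_v+q_v)\mu+\gamma^2(f_uq_v-f_vq_u)=:P(\mu),
\]
an upward parabola in $\mu$ whose roots are precisely $\gamma\lambda_\pm$ with $\lambda_\pm$ as in \eqref{eq:Ca1-5-Dinfty} (real exactly when $Q\ge0$, $Q$ as in Case~1). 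Thus the instability is equivalent to the statement that there is an eigenvalue $\mu>0$ of $-\Delta_\Gamma$ with $P(\mu)<0$.

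Finally I would translate this into Cases~1 and~2 by distinguishing the sign of $P(0)=\gamma^2(f_uq_v-f_vq_u)$. If $f_uq_v-f_vq_u<0$ then $P(0)<0$, so $P$ has exactly one positive root $\gamma\lambda_+$ (with $\lambda_+>0$, as then $Q>(df_u-f_v+q_v)^2\ge0$), and $P<0$ on $(0,\gamma\lambda_+)$ among positive arguments; this is exactly Case~2, i.e.\ \eqref{eq:Ca2-1-Dinfty} together with the existence of an eigenvalue satisfying \eqref{eq:Ca2-2-Dinfty}. If $f_uq_v-f_vq_u\ge0$ then $P(0)\ge0$, and, $P$ being an upward parabola with vertex at $\mu^\ast=\gamma(df_u-f_v+q_v)/(2d)$ and $P(\mu^\ast)=-\gamma^2Q/(4d)$, it attains negative values on $(0,\infty)$ if and only if $df_u-f_v+q_v>0$ and $Q>0$; in that case its roots are $\gamma\lambda_\pm$ with $0\le\lambda_-<\lambda_+$ (using $\sqrt Q\le df_u-f_v+q_v$) and $P<0$ precisely on $(\gamma\lambda_-,\gamma\lambda_+)$, so the instability amounts to an eigenvalue $\mu$ with $\lambda_-<\mu/\gamma<\lambda_+$ — exactly Case~1. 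Collecting the two alternatives gives the asserted equivalence.

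The only genuine obstacle is bookkeeping rather than conceptual: one must verify with care that the non-local functional $V[u+v]$ perturbs only the constant Fourier mode, so that the $k\ge1$ modes truly reduce to the two-dimensional systems $\dot w=A_{\mu_k}w$ (and that the $k=0$ mode is the ODE reduction, whose stability is already characterized), and then keep the sign information straight — in particular that $\operatorname{tr}A_\mu<0$ always, that $\lambda_-\ge0$ in Case~1, and that $\lambda_+>0$ in Case~2 — so that the elementary discussion of $P$ matches the conditions stated verbatim. Everything else is the standard reduction to a $2\times2$ linear stability problem.
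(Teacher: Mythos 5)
Your proposal is correct and follows essentially the same route as the paper: expand in eigenfunctions of $-\Delta_\Gamma$, observe that the non-local term only affects the constant mode (which is stable by \eqref{eq:Tu-gen-1}, \eqref{eq:Tu-gen-2}), note that \eqref{eq:fu-fv} and $q_v\le 0$ force the trace (equivalently, the linear-in-$\omega$ coefficient of \eqref{eq:corr-1-Dinfty}) to have the right sign so that instability reduces to negativity of the determinant $P(\mu)$, and then discuss the sign of the quadratic. The only cosmetic difference is that you carry out the final case distinction on $P$ inline, whereas the paper delegates it to the computation already done in Corollary \ref{cor:instab}.
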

\begin{proof}
The linearization of \eqref{eq:u-gen}, \eqref{eq:v-gen} in $(u_*,v_*)$ is given by
\begin{align}
  \partial_t u \,&=\, \Delta_\Gamma u + \gamma f_u u +\gamma f_v v, \label{eq:lin-u-gen}\\
  \partial_t v \,&=\, d\Delta_\Gamma v + \gamma \left(-f_uu-f_v v +
    q_u u +q_v v -c q_V\int_{\Gamma} (u + v) \dgx\right). \label{eq:lin-v-gen}
\end{align}
It suffices to consider perturbations of the form
\begin{gather}
	u(y,t) \,=\,  a e^{\omega t} \psi(y)  ,\qquad v(y,t) \,=\,  b e^{\omega t} \psi(y) \label{eq:ansatz-lin}
\end{gather}
with $a,b\in\R$, where $\psi$ is an eigenvector of $-\Delta_\Gamma$ to an eigenvalue $\mu$.
The operator $-\Delta_\Gamma$ has only countably many eigenvalues that are nonnegative. Zero is a simple eigenvalue with eigenspace given by the constant functions on $\Gamma$. As we have considered spatially homogeneous perturbations already above we can restrict ourselves to $\mu>0$. Any eigenvector for an eigenvalue $\mu>0$ satisfies
\begin{align*}
	\int_{\Gamma} (u + v) \dgx\,=\, 0.
\end{align*}
Then $(u,v)$ as in \eqref{eq:ansatz-lin} is a solution of \eqref{eq:lin-u-gen}, \eqref{eq:lin-v-gen} if and only if
\begin{align}
   0 &= \omega^2 + \omega\big((d+1)\mu + \gamma (- f_u +f_v -q_v)\big) \notag\\
   &\quad + d\mu^2 +  \gamma \mu(- df_u + f_v - q_v)
   + \gamma^2 (f_u q_v - f_v q_u). \label{eq:corr-1-Dinfty}
\end{align}
The inequality \eqref{eq:fu-fv} implies that in \eqref{eq:corr-1-Dinfty} the term on the right-hand side that is linear in $\omega$ is positive for positive $\omega$. A positive zero of this equation therefore exists if and only if
\begin{gather}
    d\mu^2 +  \gamma \mu(- df_u + f_v - q_v)
    + \gamma^2 (f_u q_v - f_v q_u) \,<\, 0. \label{eq:corr-2-Dinfty}
\end{gather}
which is identical to \eqref{eq:corr-2} for $\mu=l(l+1)$. In Corollary \ref{cor:instab} we have proved that this condition is equivalent to the property that Case 1 or Case 2 hold.
\end{proof}
By Proposition \ref{prop:inst-new} the (sufficient) instability conditions from Corollary \ref{cor:instab} are sharp for $D=\infty$. We remark that in the classical local case, which corresponds to $q_V =0$, by \eqref{eq:Tu-gen-2} only Case 1 is possible, which just describes the usual conditions for an Turing instability. Case 2 on the other hand represents a different mechanism that is not present for local two-variable systems.

Similarly as in Remark \ref{rem:stab-ode} we observe for the non-local system \eqref{eq:u-gen}--\eqref{eq:Vnew} that the inequality \eqref{eq:Ca1-1-Dinfty} that characterizes Case 1 corresponds to the stability of the non-local ODE system \eqref{eq:u-gen-ode}, \eqref{eq:v-gen-ode} with respect to spatially heterogeneous perturbations. Due to the non-locality this property however is not equivalent to the stability of the non-local reaction--diffusion system with respect to spatially homogeneous perturbations. 
In particular, even for zero lateral diffusion in the case that \eqref{eq:Ca2-1-Dinfty}, \eqref{eq:Ca2-2-Dinfty} hold the non-local system in unstable with respect to spatially heterogeneous perturbations. See Section \ref{sec:discuss} for a further discussion.

In Case 1 we deduce from \eqref{eq:Ca1-2-Dinfty} that $f_u>0$ and further, by \eqref{cdt:q} and \eqref{eq:Ca1-1-Dinfty}
\begin{align*}
	0\,&\leq\, f_uq_v -f_vq_u \,\leq\, (f_u-f_v)q_u,
\end{align*}
hence $q_u \leq 0$. In particular, in Case 1 the stationary point $(u_*,v_*)$ needs to be of activator--substrate-depletion type. In contrast Case 2 is less restrictive, and does allow for stationary points with $f_u\leq 0$ and $q_u\geq 0$. 

We further observe that for equal lateral diffusion $d=1$ no instabilities of  \eqref{eq:u-gen}, \eqref{eq:v-gen} exist in Case 1. In fact \eqref{eq:fu-fv}, \eqref{eq:Ca1-2-Dinfty}, and \eqref{cdt:q} would imply
\begin{gather*}
    0\,>\, f_u -f_v  \,> \,-q_v \,\geq\, 0,
\end{gather*}
which gives a contradiction. In contrast, in Case 2, i.e. under the condition \eqref{eq:Ca2-1-Dinfty}, for any $d \geq 0$ there exists $\gamma>0$ such that \eqref{eq:Ca2-2-Dinfty} is satisfied and an instability exists.

A particular property, different from the classical Turing instability is that for $d=1$ in Case 2 the most unstable perturbations of system
\eqref{eq:u-gen}, \eqref{eq:v-gen} is always in direction of an eigenvector corresponding to the smallest
 positive eigenvalue $\mu$. In fact, if we consider the unique positive root $\omega=\omega(\mu)$ of \eqref{eq:corr-1-Dinfty} as function of $\mu>0$ we observe that $\omega(\mu) + \mu$ is independent of $\mu$ and we therefore deduce that $\omega(\mu)$ is decreasing in $\mu$.
\section{Numerical Treatment of the Full
  System}\label{sec:numericsFull}

In the following, we present numerical simulations of 
\eqref{eq:V}--\eqref{eq:flux1}. These confirm the results
of the linear stability analysis of Section \ref{sec:turingFullA} and
in addition allow to study the behavior beyond the linear regime.

\subsection{Phase-field approach for coupling bulk- and surface PDE's}

In order to numerically treat equations on the membrane and inside the cell, we
use a phase-field approach. A diffuse-interface description of coupled bulk
diffusion and ordinary differential equations on the bounding surface
has been proposed in \cite{LeRa05} to simulate membrane-bound Turing
patterns. In \cite{RaVo06} a diffuse-interface approach for solving
PDE's on surfaces has been introduced. Moreover, in \cite{LiLoRaVo09}
a diffuse-interface approximation for PDE's in domains with boundaries
implicitly given by phase field functions has been provided, see also
\cite{KoLeRa03} for the special case of no-flux boundary
conditions. In order to treat the spatially coupled system \eqref{eq:V}--\eqref{eq:flux1} we combine both methods. For a similar approach see
\cite{TeLiLoWaVo09}. Alternative methods, different from a phase-field approach have also been used in similar contexts. A finite element analysis for a
coupled bulk--surface equation has recently been presented in \cite{ElRa13}. In \cite{NoGaChReScSl07} finite
volume techniques are applied to reaction--diffusion
equations on curved surfaces, coupled to diffusion in the volume.  

We use here the diffuse-interface approach as a convenient numerical 
method. It can more easily be adapted to complicated domains and realistic cell shapes. In this case the main effort is to construct a suitable 
discrete signed-distance function from the cell boundary, which is often easier to obtain than a triangulation of the boundary, necessary in other methods. Furthermore, coupling of equations in the bulk and on its boundary does not require any coupling of meshes
with different dimensions. Finally, an extension of the phase-field approach to evolving membrane shapes is in principle relatively easy (though costly) and even allows to include topological changes. On the other hand, solving partial differential equations on $\Gamma$ is computationally certainly more expensive in a diffuse-interface setting.

The strategy of the phase-field approach is as follows: We choose a (simple) computational domain $\Omega$
containing $\overbar{B}$ and we introduce a smeared-out indicator
function $\phi: \Omega \co \R$, for example given by
\begin{equation*}
  \phi(x) := \frac{1}{2}(1 - \tanh(3 r(x)/\eps)),
\end{equation*}
where $r$ denotes the signed distance from $\Gamma$, chosen negative
inside $B$ and positive outside $\overbar{B}$. The surface $\Gamma$
is then given by the level set $\{\phi = \frac1{2}\}$. The
corresponding `diffuse interface' is understood as the layer where
$\phi$ is away from $\pm 1$. The order of the diffuse interface width
is then determined by the (small) parameter $\eps>0$.
  
We define $b(z) := 36 z^2 (z - 1)^2$ for $z \in \R$. According to
\cite{RaVo06} and  \cite{LiLoRaVo09}, a
diffuse-interface approximation for the coupled
system \eqref{eq:V}--\eqref{eq:flux1} is given by 
\begin{align}
  \label{eq:diff1}
  \phi \pd_t V &= D \D \cdot (\phi \D V) -\eps^{-1}
  b(\phi)\gamma q(u,v,V),\\
  \label{eq:diff2}
  b(\phi) \pd_t u &= \D \cdot
  (b(\phi) \D u) + b(\phi) \gamma f(u,v),\\
  \label{eq:diff3}
  b(\phi) \pd_t v &=  d \D \cdot
  (b(\phi) \D v)+b(\phi)\gamma(- f(u,v) + q(u,v,V))
\end{align}
for unknown functions $u, v, V: \Omega \times I \to \R$. We complement this system by initial conditions 
\begin{align*}
  &V(\cdot, 0) \,=\, V_0,\quad v(\cdot, 0) \,=\, v_0,\quad
  u(\cdot, 0) \,=\, u_0
\end{align*}
for given extensions to $\Omega$ of the original initial conditions $u_0, v_0, V_0$, which were only defined on $\Gamma$ and $B$, respectively. In
\eqref{eq:diff1} the phase field function $\phi$ restricts the time
derivative and diffusion to the cell, while the function $b(\phi)$
restricts the flux $q$ to the membrane. Accordingly, in
\eqref{eq:diff2}, \eqref{eq:diff3} the function  $b(\phi)$ is applied to
restrict the reaction diffusion equations to the membrane.

\subsection{Numerical Approach}

We consider either the sphere $\Gamma = S^2$ or an ellipsoid $\Gamma$
with semi-axes $a, b$ and $c$ in $x_1$-, $x_2$- and $x_3$-direction,
respectively. To discretize in time we use a
semi-implicit Euler scheme with all nonlinearities linearized
corresponding to a single Newton step. We choose a computational
domain $\Omega := (-2,2)^3$ containing $\overbar{B}$, and we use an
adaptively refined mesh in order to discretize system
\eqref{eq:diff1}--\eqref{eq:diff3} in space using linear finite
elements. On the boundary $\pd \Omega$ we assume periodicity of the
discrete solutions $u_h, v_h, V_h$. Due to the degeneracy of equations
\eqref{eq:diff1}--\eqref{eq:diff3} and to avoid numerical problems we
regularize \eqref{eq:diff1}--\eqref{eq:diff3} in all second order
terms by adding a small positive number $\delta$ to $b(\phi)$. The
resulting linear system of equations is solved by a stabilized
bi-conjugate gradient method (BiCGStab) for $(u_h, v_h, V_h)$ in each
time step. The resulting scheme has been implemented in the adaptive
FEM toolbox AMDiS \cite{VeVo07}.

\subsection{Numerical Examples}

In all computations, we use $f, q$ as given in \eqref{eq:f-app} and
\eqref{eq:q-app}, respectively. We assume random initial
conditions $u_0: \Omega \to [0,0.0002]$, $v_0:
\Omega \to [0,0.0002]$. Moreover, we choose a constant initial
condition $V_0$ for $V_h$ such that the expected value of the total mass in the system is given by $V_{init}|B|$ and $V_{init}=5.1$, which is the value used in Section \ref{sec:numericsNonLocal} for the reduced system. This choice results in the case of a spherical cell in the initial condition $V_0 =  5.0994$ for the cytosolic concentration $V_h$.

For the parameters that determine $f$ and $q$ in \eqref{eq:f-app}, \eqref{eq:q-app} we chose the values given in Table \ref{table}. In particular we
always assume $d=1$ corresponding to equal lateral diffusion constants
for $u$ and $v$. Note that for this choice, Case 2 in Corollary
\ref{cor:instab} applies and guarantees an instability for $D > 0$
sufficiently large.

\begin{table}[h]
  \begin{center}
    \begin{tabular}[h]{| c | c | c | c | c | c | c |
        c | c | c | c | c | c | c | c | c | c | c | c |}
      \hline
      parameter & $d$ & $\gamma$ & $a_1$ &
      $a_2$ & $a_3$ 
      & $a_4$ & $a_5$
      & $a_6$  & $a_{-6}$ & $\eps$ & $\delta$ \\
      \hline
      value & $1$ & $400$ & $0.02$ & $20$ & $160$ &
      $1$ & $0.5$
      & $0.36$ & $5$ & $0.1$ & $10^{-6}$ \\
      \hline
    \end{tabular}
    \vskip5pt\caption{\footnotesize Parameters used for numerical
      results.}\label{table}
  \end{center}
\end{table}

\subsubsection{Instability for large cytosolic diffusion
  coefficient}

First, we choose $D = 100$. In Fig. \ref{fig:D=100} we see results in
this case, showing contour plots of the solutions $u_h, v_h, V_h$
evaluated on the level set $\Gamma_h := \{\phi_h = 1/2\}$ at different
times. Thereby, one observes the evolution to an unstable stationary
solution and towards an equilibrium with local maxima of $u_h$ and
$v_h$ on $\Gamma_h$. In a way this result shows similarities to
Turing--type instabilities, where usually differences in diffusion
constants drive the instability. In this case, $d = 1$ corresponds to
equal lateral diffusion constants. However, the large cytosolic
diffusion admits the development of heterogeneities.

\begin{figure}[here]
  \centerline{
    \hfill
    \subfigure[$u_h(t = 0)$]{
      \includegraphics*[width=0.14\textwidth]{./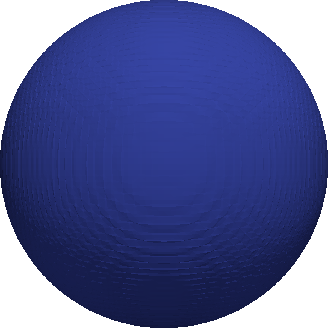}
    }
    \hfill
    \subfigure[$u_h(t = 0.5235)$]{
      \includegraphics*[width=0.14\textwidth]{./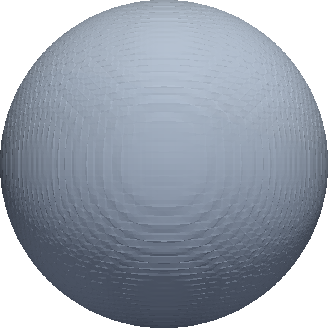}
    }
    \hfill
    \subfigure[$u_h(t = 1.0235)$]{
      \includegraphics*[width=0.14\textwidth]{./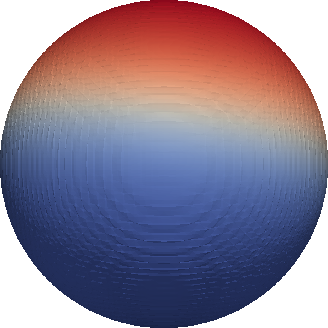}
    }
    \hfill
    \subfigure[$u_h(t = 5.0235)$]{
      \includegraphics*[width=0.14\textwidth]{./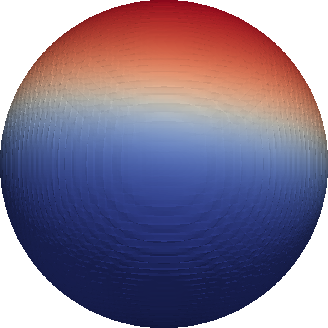}
    }
    \hfill
    \subfigure[]{
      \includegraphics*[width=0.05\textwidth]{./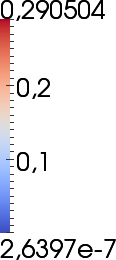}
    }
    \hfill
  }
  \vspace{3ex}
  \centerline{
    \hfill
    \subfigure[$v_h(t = 0)$]{
      \includegraphics*[width=0.14\textwidth]{./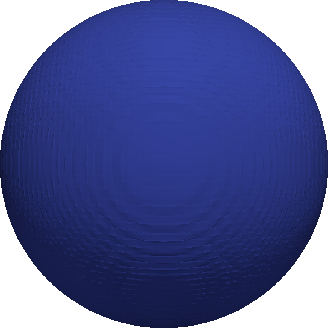}
    }
    \hfill
    \subfigure[$v_h(t = 0.5235)$]{
      \includegraphics*[width=0.14\textwidth]{./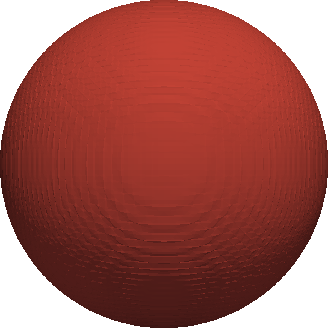}
    }
    \hfill
    \subfigure[$v_h(t = 1.0235)$]{
      \includegraphics*[width=0.14\textwidth]{./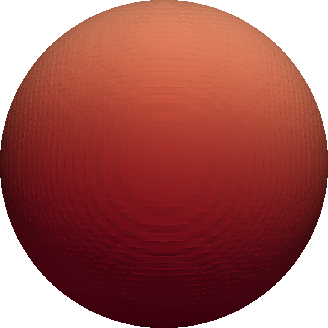}
    }
    \hfill
    \subfigure[$v_h(t = 5.0235)$]{
      \includegraphics*[width=0.14\textwidth]{./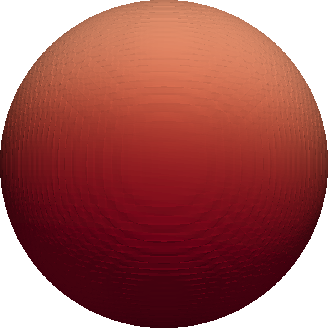}
    }
    \hfill
    \subfigure[]{
      \includegraphics*[width=0.05\textwidth]{./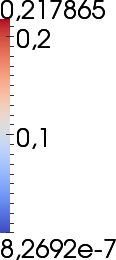}
    }
    \hfill
  }
  \vspace{3ex}
  \centerline{
    \hfill
    \subfigure[$V_h(t = 0)$]{
      \includegraphics*[width=0.14\textwidth]{./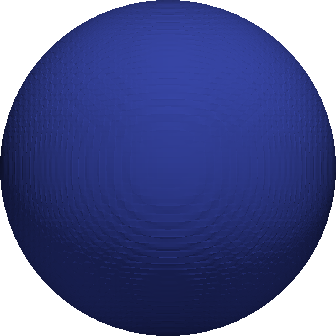}
    }
    \subfigure[]{
      \includegraphics*[width=0.0365\textwidth]{./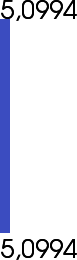}
    }
    \hfill
    \subfigure[$V_h(t = 5.0235)$]{
      \includegraphics*[width=0.14\textwidth]{./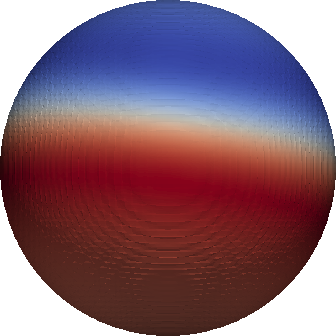}
    }
    \subfigure[]{
      \includegraphics*[width=0.05\textwidth]{./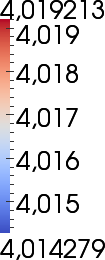}
    }
    \hfill
  }
  \caption{\label{fig:D=100}\footnotesize Instability for increased cytosolic
    diffusion ($D=100$). From left to right: the discrete solutions
    $u_h$ (upper row), $v_h$ (middle row) on level
    set $\{\phi_h = \frac1{2}\}$ for $t=0$, $t = 0.5235$, $t =
    1.0235$, and $t = 5.0235$, discrete solution $V_h$ on level
    set $\{\phi_h = \frac1{2}\}$ for $t=0$, $t = 5.0235$ (lower row).}
\end{figure}



\subsubsection{Stability for equal lateral and cytosolic diffusion
  coefficients} 

For $D = d = 1$, there is no instability as the results in
Fig. \ref{fig:D=1} indicate. 

\begin{figure}[here]
    \hfill
    \subfigure[$u_h(t = 0)$]{
      \includegraphics*[width=0.12\textwidth]{./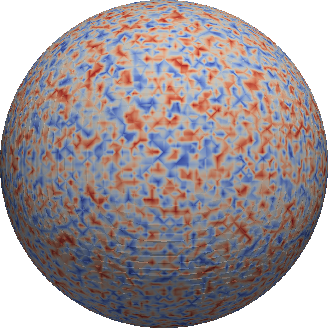}
    }
    \hfill
    \subfigure[]{
      \includegraphics*[width=0.04\textwidth]{./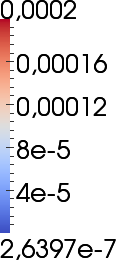}
    }
    \hfill
    \subfigure[$u_h(t = 5.0235)$]{
      \includegraphics*[width=0.12\textwidth]{./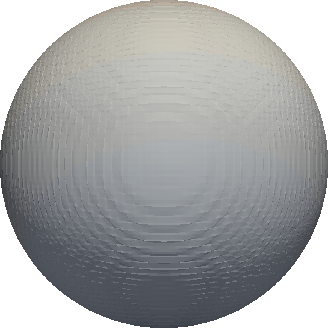}
    }
    \hfill
    \subfigure[]{
      \includegraphics*[width=0.04\textwidth]{./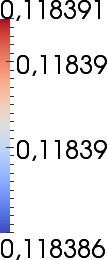}
    }
    \hfill
    \subfigure[$v_h(t = 0)$]{
      \includegraphics*[width=0.12\textwidth]{./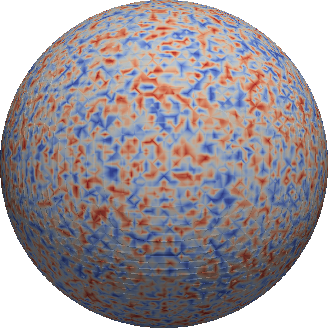}
    }
    \hfill
    \subfigure[]{
      \includegraphics*[width=0.04\textwidth]{./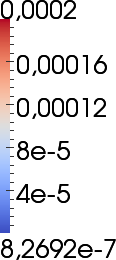}
    }
    \hfill
    \subfigure[$v_h(t = 5.0235)$]{
      \includegraphics*[width=0.12\textwidth]{./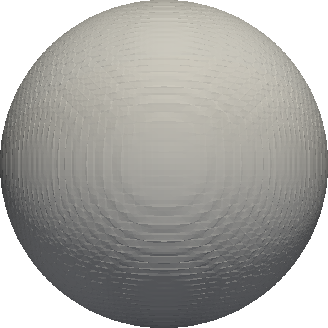}
    }
    \hfill
    \subfigure[]{
      \includegraphics*[width=0.04\textwidth]{./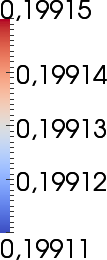}
    }
    \hfill\\
  \centerline{
    \hfill
    \subfigure[$V_h(t = 0)$]{
      \includegraphics*[width=0.12\textwidth]{./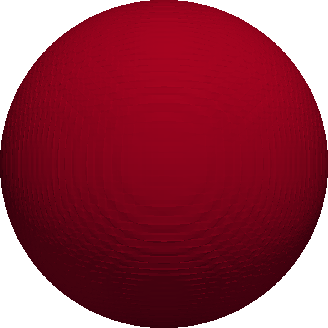}
    }
    \subfigure[]{
      \includegraphics*[width=0.028\textwidth]{./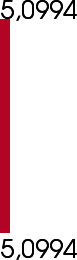}
    }
    \subfigure[$V_h(t = 5.0235)$]{
      \includegraphics*[width=0.12\textwidth]{./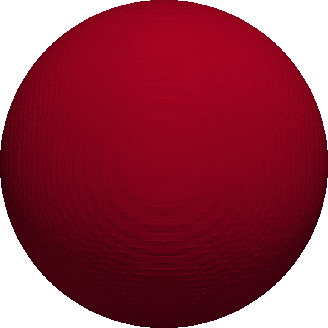}
    }
    \subfigure[]{
      \includegraphics*[width=0.04\textwidth]{./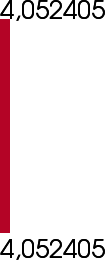}
    }
  \hfill
}
  \caption{\label{fig:D=1}\footnotesize Stability for equal membrane and
    cytosolic diffusion coefficients ($D=1$). From left to right: The
    discrete initial and stationary solutions $u_h$, $v_h$ and $V_h$
    on level set $\{\phi_h = \frac1{2}\}$.}
\end{figure}

\subsubsection{Ellipsoidal membrane}

We consider an ellipsoidal membrane $\Gamma$ with semi-axes $0.75$,
$1$ and $1.5$. As in the first example we have used $D = 100$. In
Fig. \ref{fig:D=100Ell}, the evolution towards a nearly stationary
discrete solution $u_h$ on the level set $\{\phi_h = \frac1{2}\}$ is
displayed. 

\begin{figure}[here]
  \centerline{
    \hfill
    \subfigure[$u_h(t = 0)$]{
      \includegraphics*[width=0.14\textwidth]{./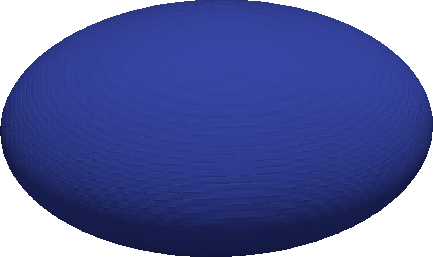}
    }
    \hfill
    \subfigure[$u_h(t = 2.0235)$]{
      \includegraphics*[width=0.14\textwidth]{./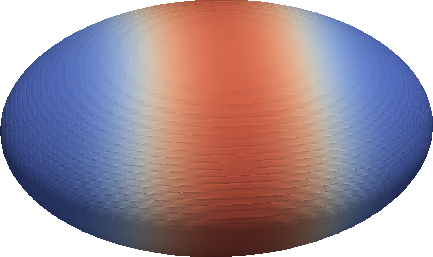}
    }
    \hfill
    \subfigure[$u_h(t = 12.0235)$]{
      \includegraphics*[width=0.14\textwidth]{./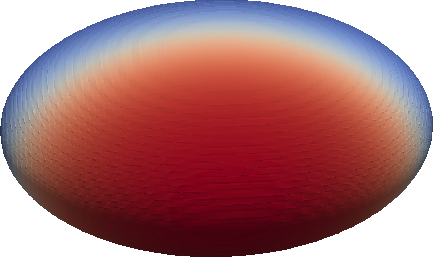}
    }
    \hfill
    \subfigure[$u_h(t = 27.0235)$]{
      \includegraphics*[width=0.14\textwidth]{./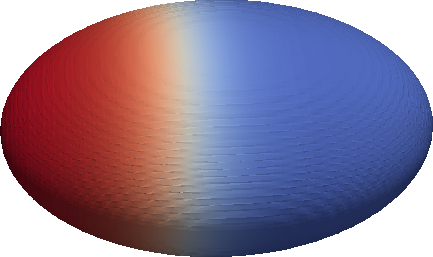}
    }
    \hfill
    \subfigure[]{
      \includegraphics*[width=0.05\textwidth]{./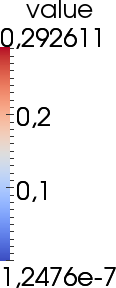}
    }
    \hfill
  }
  \caption{\label{fig:D=100Ell}\footnotesize Instability for increased cytosolic
    diffusion ($D=100$). From left to right: the discrete solutions
    $u_h$ on level set $\{\phi_h = \frac1{2}\}$ for $t=0$, $t = 2.0235$, $t =
    12.0235$, and $t = 27.0235$.}
\end{figure}

\section{Numerical Treatment of the Non-local System}\label{sec:numericsNonLocal}

In this section, we use a parametric finite element description of
the non-local system \eqref{eq:u-gen}--\eqref{eq:Vnew} in order to
numerically investigate instabilities for $d=1$ found in
Sec. \ref{sec:turingNonLocalA}. For this purpose, we apply the algorithm
described in \cite{RaRo12}. To discretize in time, we apply a
semi-implicit Euler scheme, where all nonlinearities are linearized in 
a suitable way. We use a parametric finite element approach
\cite{DzEl07a} with linear finite elements, where we solve as a system
for the two concentrations $u, v$ on the membrane. The non-local term
is treated fully explicitly. The resulting linear system is solved by
a stabilized bi-conjugate gradient method (BiCGStab). The scheme is
implemented using the adaptive finite element toolbox AMDiS
\cite{VeVo07}. 

\subsection{Numerical Examples}

In all following examples we use for $f$ and $q$ the specific choices
proposed in \eqref{eq:f-app} and \eqref{eq:q-app}, respectively. Thereby, we
use parameters from Table \ref{table2}. Furthermore, we consider the
unit-sphere $\Gamma = S^2$ and its discrete approximation $\Gamma_h$
through a triangulation with a uniform grid. We assume random initial
conditions $u_0: \Gamma_h \to [0,0.0002]$, $v_0: \Gamma_h \to
[0,0.0002]$. Moreover, we choose $V_{init} = 5.1$. Note that this choice of
initial conditions is the exact counterpart of the initial conditions
used for the simulation of the full system in
Section \ref{sec:numericsFull}. 

\begin{table}[h]
  \begin{center}
    \begin{tabular}[h]{| c | c | c | c | c | c | c |
        c | c | c | c | c | c | c | c | c | c | c |}
      \hline
      parameter & $d$ & $\gamma$ & $a_1$ &
      $a_2$ & $a_3$ 
      & $a_4$ & $a_5$
      & $a_6$  & $a_{-6}$ \\
      \hline
      value & $1$ & $400$ & $0.02$ & $20$ & $160$ &
      $1$ & $0.5$
      & $0.36$ & $5$ \\
      \hline
    \end{tabular}
    \vskip5pt \caption{\footnotesize Parameters used for numerical
      results (non-local model).}\label{table2}
  \end{center}
\end{table}

\subsubsection{Instability with equal lateral diffusion coefficients}

In Fig. \ref{fig:basic}, one can see contour plots of the discrete
solutions $u_h, v_h$ at different times for $d=1$. Similarly to the
first example in Sec. \ref{sec:numericsFull}, one observes an
evolution to an unstable spatially homogeneous solution and towards a
stationary solution with a single spot pattern, which is in agreement
with remarks in Sec.  \ref{sec:turingNonLocalA}.

\begin{figure}[here]
  \centerline{
    \hfill
    \subfigure[$u_h(t = 0)$]{
      \includegraphics*[width=0.14\textwidth]{./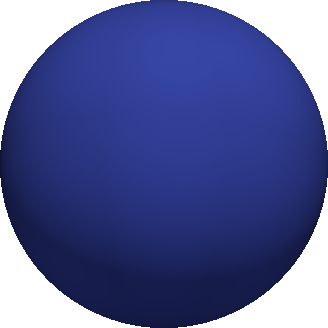}
    }
    \hfill
    \subfigure[$u_h(t = 0.5)$]{
      \includegraphics*[width=0.14\textwidth]{./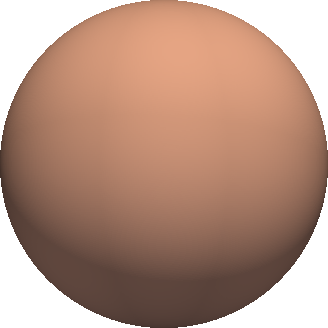}
    }
    \hfill
    \subfigure[$u_h(t = 1)$]{
      \includegraphics*[width=0.14\textwidth]{./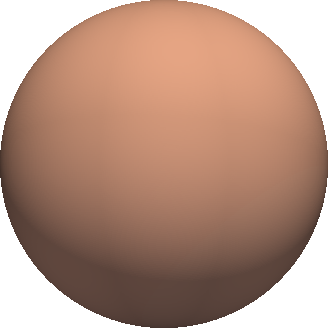}
    }
    \hfill
    \subfigure[$u_h(t = 5)$]{
      \includegraphics*[width=0.14\textwidth]{./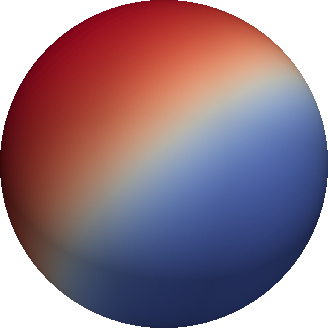}
    }
    \hfill
    \subfigure[]{
      \includegraphics*[scale=0.23]{./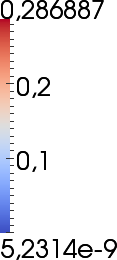}
    }
    \hfill
  }
  \vspace{3ex}
  \centerline{
    \hfill
    \subfigure[$v_h(t = 0)$]{
      \includegraphics*[width=0.14\textwidth]{./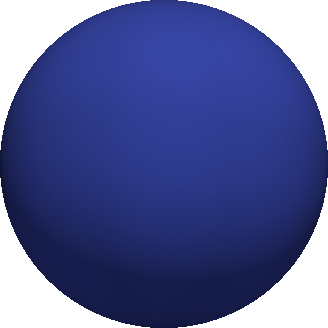}
    }
    \hfill
    \subfigure[$v_h(t = 0.5)$]{
      \includegraphics*[width=0.14\textwidth]{./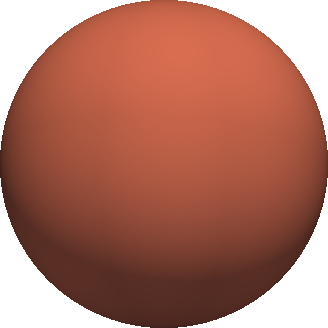}
    }
    \hfill
    \subfigure[$v_h(t = 1)$]{
      \includegraphics*[width=0.14\textwidth]{./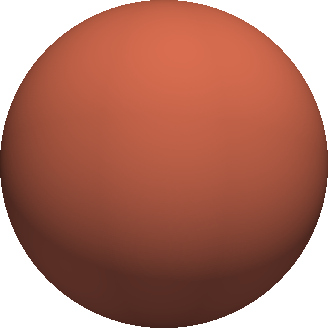}
    }
    \hfill
    \subfigure[$v_h(t = 5)$]{
      \includegraphics*[width=0.14\textwidth]{./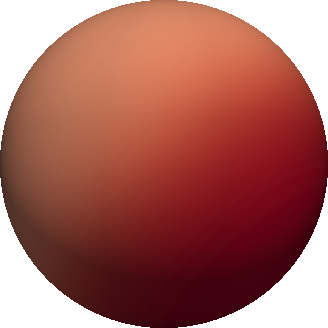}
    }
    \hfill
    \subfigure[]{
      \includegraphics*[scale=0.23]{./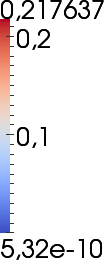}
    }
    \hfill
  }
  \caption{\label{fig:basic}\footnotesize Instability with diffusion ($d=1$). From
    left to right: the discrete solutions $u_h$ (upper row), $v_h$
    (lower row) for $t=0$, $t = 0.5$, $t = 1$, and $t = 5$.}
\end{figure}

\subsubsection{Stability for increased diffusion}

For increased diffusion coefficient, the instability vanishes, see
Fig. \ref{fig:d=10}. Here we have scaled $f$, $q$ and time $t$ by a
factor $1/10$, which corresponds to scaling the diffusion coefficients $d_u$ and
$d_v$ in the dimensional formulation (see the appendix) by a factor $10$. To be more precise, we have decreased the value of $\gamma$ from $400$ as in Table \ref{table2} to $40$ and have rescaled time. The explanation for the stabilization by lowering the value of $\gamma$ is that the inequality \eqref{eq:Ca2-2} is violated for small enough $\gamma$. In an informal way
one could explain this effect as a consequence of the `decreased difference' between cytosolic diffusion constant $D=\infty$ and lateral membrane diffusions $d_u,d_v$. This suggests, that for $d_u=d_v$ and an Case 2 instability large differences between $D$ and $d_u,d_v$ are required, which resembles a classical Turing type mechanism in the $V$, $u$ variables. 

\begin{figure}[here]
  \centerline{
    \hfill
    \subfigure[$u_h(t = 0)$]{
      \includegraphics*[width=0.12\textwidth]{./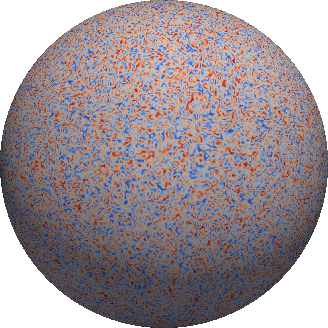}
    }
    \hfill
    \subfigure[]{
      \includegraphics*[width=0.04\textwidth]{./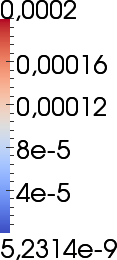}
    }
    \hfill    \subfigure[$u_h(t = 7)$]{
      \includegraphics*[width=0.12\textwidth]{./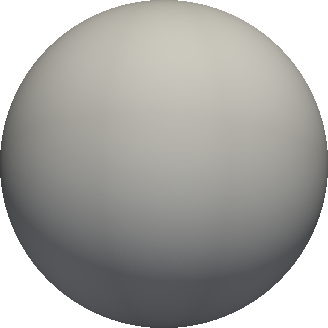}
    }
    \subfigure[]{
      \includegraphics*[width=0.04\textwidth]{./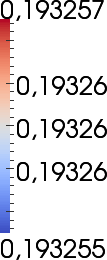}
    }
    \hfill
    \subfigure[$v_h(t = 0)$]{
      \includegraphics*[width=0.12\textwidth]{./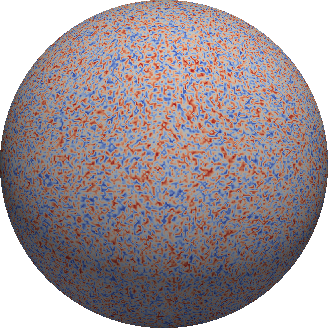}
    }
    \subfigure[]{
      \includegraphics*[width=0.04\textwidth]{./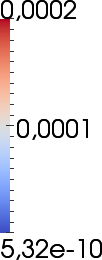}
    }
    \hfill
    \subfigure[$v_h(t = 7)$]{
      \includegraphics*[width=0.12\textwidth]{./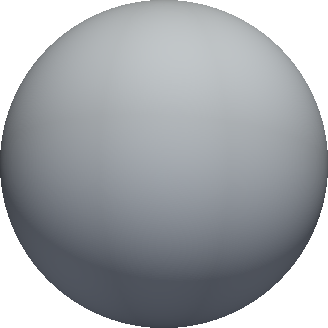}
    }
    \subfigure[]{
      \includegraphics*[width=0.04\textwidth]{./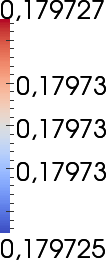}
    }
    \hfill
  }
  \caption{\label{fig:d=10}\footnotesize Stability for increased diffusion: The
    discrete initial and stationary solutions $u_h$ (left), $v_h$
    (right).}
\end{figure}

\subsubsection{Rich Nonlinear Dynamics}

In Fig. \ref{fig:richDynamics}, we present results showing the rich
dynamics the model includes. Thereby we replace the corresponding parameters in Table
\ref{table} by the values given in Table \ref{tab:richDynamics}. One observes that
the system evolves to a homogeneous stationary which is unstable and
later forms a pattern, which is again unstable. Finally the system
reaches a stable homogenous stationary state, different from the initial one.

\begin{table}[h!]
  \begin{center}
    \begin{tabular}[h]{| c | c | c | c | c | c | c |
        c | c | c | c | c | c | c | c | c | c | c | c |}
      \hline
      parameter & $d$ & $\gamma$ & $a_1$ &
      $a_2$ & $a_3$ 
      & $a_4$ & $a_5$
      & $a_6$  & $a_{-6}$ & $V_{init}$ \\
      \hline
      value & $1$ & $2000$ & $0.001$ & $20$ & $160$ &
      $1$ & $0.5$
      & $0.36$ & $10.3757$ & $10.1$ \\
      \hline
    \end{tabular}
    \vskip5pt \caption{\footnotesize Parameters used for numerical
      results in Fig. \ref{fig:richDynamics}.}\label{tab:richDynamics}
  \end{center}
\end{table}

\begin{figure}[here]
    \hfill
    \subfigure[\hspace{-13mm}$u_h(t = 0)$]{
      \includegraphics*[width=0.2\textwidth]{./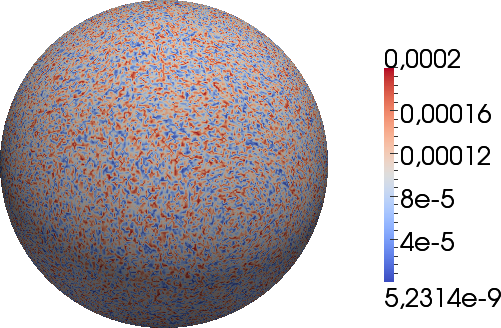}
    }
    \hfill
    \subfigure[\hspace{-11mm}$u_h(t = 1.85)$]{
      \includegraphics*[width=0.2\textwidth]{./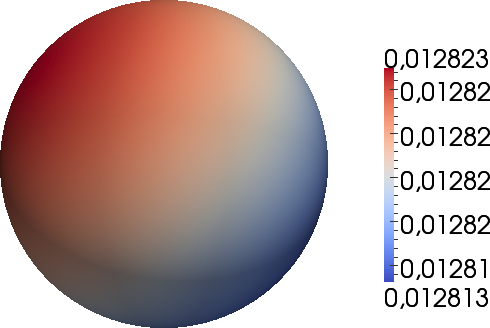}
    }
    \hfill
    \subfigure[\hspace{-11.5mm}$u_h(t = 3.2)$]{
      \includegraphics*[width=0.2\textwidth]{./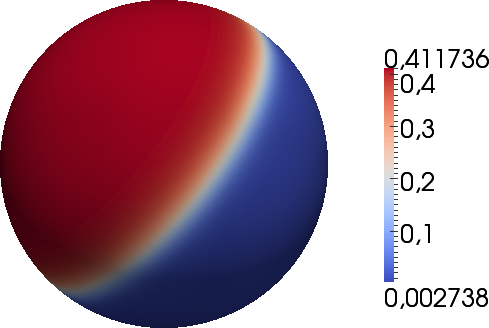}
    }
    \hfill
    \subfigure[\hspace{-11mm}$u_h(t = 3.45)$]{
      \includegraphics*[width=0.2\textwidth]{./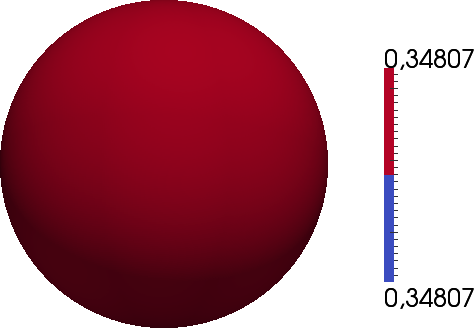}
    }
    \hfill
  \caption{\label{fig:richDynamics}\footnotesize Rich nonlinear dynamics for
    different set of parameters: discrete solution  $u_h$ at initial
    time, at $t=1.85$ showing an unstable intermediate solution, at
    $t=3.2$ with an intermediate pattern and homogenous stationary
    solution.}
\end{figure}

\section{Discussion}\label{sec:discuss}
We have analytically and numerically studied a coupled system of surface--bulk reaction--diffusion equations. Such a system for example arises in \cite{RaRo12} as a model for the GTPase cycle in biological cells. Here the relevant quantities are the concentrations $u,v$ of active and inactive GTPase on the membrane, and the concentration $V$ of inactive GTPase in the cytosolic volume. Our main interest here was to analyze the possibility of a symmetry breaking. The latter refers to an instability of spatially homogeneous stationary points that are stable with respect to symmetry-conserving perturbations, which in our case means that $u,v$, and the boundary values of $V$ are spatially homogeneous.

For spherical cell shapes we have performed a linearized stability analysis and have discovered that two different mechanisms for symmetry breaking are present. The first one requires a large difference in the lateral diffusion constants of $u$ and $v$, expressed by a large value of $d$ in \eqref{eq:V}--\eqref{eq:flux1}. This mechanism is closely related to a classical Turing instability for a two-variable system with $u$ as activator and $v$ as substrate. The second mechanism however does even occur for equal lateral diffusion constants of $u$ and $v$, i.e. even for $d=1$, which is much more realistic in the application to the GTPase cycle in cells. This second mechanism on the other hand requires that the cytosolic diffusion coefficient $D$ is much larger than the lateral diffusion coefficients. The simulations show that a factor of $100$ between both is sufficient, which is about the ratio between cytosolic and lateral membrane diffusion measured for biological cells \cite{KhHW00}. The new second instability mechanism is much closer to a Turing type mechanism in the $V,u$ variables and is a specific property of coupled surface--bulk reaction--diffusion systems.

Our results for the fully coupled system are comparable to those for its non-local reduction in the infinite cytosolic diffusion limit. However, here the corresponding stability analysis applies to more general membrane shapes and the reduction allows to further illuminate the two instability mechanisms. We in particular observe that for $d=1$ in the new second mechanism the most unstable perturbation is always in direction of eigenvectors corresponding to the smallest nonzero eigenvalue. This prefers the formation of a single membrane component with high concentration of activated GTPase. Again, this is much more in coincidence with the experimentally observed behavior than a classical Turing mechanism, where the emerging patterns are typically very sensitive to parameter changes. We remark that the same robust localization into a single spot has also been observed in \cite{GoPo08} in a related model for the GTPase cycle.

The distinction between the two mechanisms is expressed by the
inequalities \eqref{eq:Ca1-1} and \eqref{eq:Ca2-1}. As observed in
Remark \ref{rem:stab-ode} the latter implies that for Case 2
instabilities the spatially homogeneous state is unstable with respect
to spatially heterogeneous perturbations even in the absence of
membrane diffusion. It is important to notice that in the full and in
the reduced model there is a difference between (a) the stability with
respect to perturbations that are spatially homogeneous on the cell
membrane (and radially symmetric in the cell), and (b) the stability
(with respect to not necessarily spatially homogeneous perturbations)
of the corresponding systems without membrane diffusion. For local
reaction--diffusion systems such a difference is not present and there
is a coincidence between diffusive instabilities and symmetry
breaking. Since for our models and a Case 2 instability the symmetric
state is even without membrane diffusion unstable the term `diffusion
induced instability' might appear inappropriate. On the other hand, as
explained above, the instability origins from the large cytosolic
diffusion compared to the lateral diffusion and is in this respect
diffusion induced. The main property of both a Case 1 and a Case 2
instability is that we observe a symmetry breaking, which is clearly
confirmed by our numerical simulations. Starting from a spatially
homogeneous distribution, as long as no stationary point is reached,
the system is driven by the kinetic reaction and sorption terms and
spatial heterogeneities are not amplified. If the evolution approaches
a homogenous stationary state (and this typically requires its
stability with respect to spatially homogeneous perturbations) and if
this state is unstable with respect to general perturbations then
heterogeneous pattern develop and a symmetry breaking occurs. When the
evolution moves away from the stationary point the nonlinear effects
again come into play and determine the long-time behavior. 
\begin{appendix}

\section{Non-dimensionalization}

Here we recall the formulation of the reaction--diffusion model for the GTPase cycle proposed in \cite{RaRo12}. We give specific choices of reaction and attachment/detachment laws, and the dimensional formulation including all physical units.

As above we denote the cytosolic volume of a cell by $B$ and the cell membrane by the boundary $\Gamma=\partial B$ of $B$, which is assumed to represent a smooth, closed two-dimensional surface. In addition, we fix a time interval of observation $I$. We
formulate a system of reaction--diffusion equations for the unknowns
\begin{align*}
  V & \quad \text{concentration of cytosolic
    GDP-GTPase (in complex with GDI)},\\
  v &\quad \text{concentration of membrane-bound
    GDP-GTPase},\\
  u &\quad \text{concentration of membrane-bound
    GTP-GTPase}.
\end{align*}
Physical units are given by
\begin{equation*}
  [V] = \frac{\text{mol}}{\text{m}^3}, \quad 
  [u] = [v] = \frac{\text{mol}}{\text{m}^2}.
\end{equation*}
The following specific form was proposed in \cite{RaRo12},
\begin{align}
  \label{eq:V-dim}
  \pd_t V &= D \Delta V \quad \text{in} \quad B
  \times I,\\
  \label{eq:u-dim}
  \pd_t u &= d_u \SL u + k_1 v g_0\left(1 - \frac{K_5u}{1 +
      K_5 u}\right) + k_2 v \frac{K_5ug_0}{1 + K_5u} - k_3 \frac{u}{u +
    k_4}\quad \text{on} \quad
  \Gamma \times I,\\
  \label{eq:v-dim}
  \pd_t v &= d_v \SL v - k_1 v g_0\left(1 - \frac{K_5u}{1 +
      K_5 u}\right) - k_2 v \frac{K_5ug_0}{1 + K_5u} + k_3 \frac{u}{u
    + k_4} + q\quad \text{on} \quad \Gamma
  \times I
\end{align}
together with a constitutive law for attachment/detachment kinetics and a flux condition,
\begin{align}
  \label{eq:flux1-app}
  &-D \nabla V \cdot \nu = q  \quad \text{on} \quad \Gamma,\\
  \label{eq:flux2-app}
  &q = b_6 \frac{|B|}{|\Gamma|}V (c_{\max} - u - v)_+ - b_{-6}v.
\end{align}
The partial differential equations \eqref{eq:V-dim}--\eqref{eq:v-dim}
include kinetic rates $k_i$, $i \in \{1,2,3,4\}$, $K_5$ and an
equilibrium concentration of membrane-bound GEF $g_0$ with units
\begin{equation*}
  [k_1] = [k_2] = \frac{\text{m}^2}{\text{mol}\cdot\text{s}}; \quad
  [k_3] = \frac{\text{mol}}{\text{m}^2\text{s}} ; \quad
  [k_4] = \frac{\text{mol}}{\text{m}^2} ; \quad
  [K_5] = \frac{\text{m}^2}{\text{mol}} ; \quad
  [g_0] = \frac{\text{mol}}{\text{m}^2}.
\end{equation*}
Furthermore, we have units of diffusion coefficients $D, d_u, d_v$ and
sorption coefficients $b_6, b_{-6}$ given by 
\begin{equation*}
  [D] = [d_u] = [d_v] = \frac{\text{m}^2}{\text{s}}; \quad 
  [b_6] = \frac{\text{m}^2}{\text{mol} \cdot \text{s}}; \quad 
  [b_{-6}] = \frac{1}{\text{s}}.
\end{equation*}
In \eqref{eq:flux1-app}, $\nu$ denotes the outward unit normal to $B$
at $\Gamma$. The constitutive equation \eqref{eq:flux2-app} for the
flux $q$ includes a saturation value $c_{\max}$. Membrane attachment is
treated as a reaction between cytosolic GTPase and a free site on the
membrane and modeled by a Langmuir rate law \cite{Kell09}. Detachment
is taken proportional to the inactive GTPase concentration.

In order to obtain a non-dimensional model, we follow \cite{RaRo12} and
define dimensionless spatial and time coordinates
\begin{align*}
  \xi := \frac1{R}x; \quad   \tau := \frac{d_u}{R ^2} t,
\end{align*}
where $R > 0$ denotes a typical length. We define $\gamma>0$ through
$R=\sqrt{\gamma}\,\mathbb{I}$ with $\mathbb{I}=1\text{m}$ denoting the
unit length. This leads to transformed domain $\tilde B := \{\xi \in
\R^3 : R \xi \in B \}$, $\tilde \Gamma := \pd \tilde B$ and time interval $\tilde
I := \{ \tau \in \R : \frac{R^2\tau}{d_u} \in I\}$. Non-dimensional
concentrations are defined through 
\begin{equation*}
  \tilde V := \frac{R}{c_{\max}} V , \quad 
  \tilde u := \frac{1}{c_{\max}} u, \quad
  \tilde v := \frac{1}{c_{\max}} v. 
\end{equation*}
Moreover, we introduce dimensionless quantities
\begin{gather*}
  a_1 := \frac{\mathbb{I} ^2}{d_u}k_1g_0, \quad 
  a_2 := \frac{1}{K_5c_{\max}}, \quad
  a_3 := \frac{k_2}{k_1}a_1, \quad
  a_4 := \frac{\mathbb{I} ^2}{d_u c_{\max}} k_3, \quad
  a_5 := \frac{k_4}{c_{\max}}, \\
  a_6 := \frac{\mathbb{I}^2 b_6}{d_u} c_{\max}\frac{|B|}{|\Gamma|R},\quad
  a_{-6} := \frac{\mathbb{I} ^2 b_{-6}}{d_u},\quad
  d := \frac{d_v}{d_u}, \quad
  \tilde D := \frac{D}{d_u}.
\end{gather*}
With these definitions, dropping all tildes and replacing
$\xi$ and $\tau$ by $x$ and $t$, respectively, we arrive at the full
mathematical model \cite{RaRo12}

\begin{align}
  \label{eq:V-app}
  \pd_t  V &=  D \Delta V \quad \text{in} \quad
  B  \times  I,\\
  \label{eq:u-app}
  \pd_t  u &= \SL  u + \gamma f(u,v)\quad \text{on} \quad
  \Gamma \times  I,\\
  \label{eq:v-app}
  \pd_t  v &= d \SL  v + \gamma (- f(u,v) + q(u,v,V)) \quad \text{on}
  \quad  \Gamma \times  I
\end{align}
with the flux condition
\begin{equation}
  \label{eq:flux-app}
  - D \nabla  V \cdot  \nu = \gamma  q \quad
  \text{on} \quad  \Gamma \times  I, 
\end{equation}
where
\begin{align}
  \label{eq:f-app}
  f(u,v)\,&:=\, 
  \left(a_1 + (a_3 -
    a_1)\frac{ u}{a_2 +  u} \right) v - a_4
  \frac{ u}{a_5 + u},\\
  \label{eq:q-app}
  q(u,v,V) \,&:=\, 
  a_6  V (1 - ( u +  v))_+ - a_{-6}  v.
\end{align}
We remark that for $a_1<a_3$, which is again a natural assumption (see \cite{RaRo12}), all the properties  that we have assumed in \eqref{cdt:q} are satisfied.
\end{appendix}

\end{document}